\documentclass[12pt]{amsart}
\usepackage{amssymb}
\usepackage{amsmath}
\usepackage{latexsym}
\usepackage{amsthm}
\usepackage{graphicx}
\usepackage{relsize}
\makeatletter
\def\thmhead@plain#1#2#3{%
 \thmname{#1}\thmnumber{\@ifnotempty{#1}{ }#2}%
 \thmnote{ \the\thm@notefont(#3)}}
\let\thmhead\thmhead@plain
\def\swappedhead#1#2#3{%
 \thmnumber{#2}\thmname{\@ifnotempty{#2}{. }#1}%
 \thmnote{ \the\thm@notefont(#3)}}
\makeatother

\theoremstyle{definition} 
\newtheorem{definition}{Definition}[section]
\newtheorem{remark}[definition]{Remark}

\theoremstyle{plain}      
\newtheorem{proposition}[definition]{Proposition}
\newtheorem{theorem}[definition]{Theorem}
\newtheorem{corollary}[definition]{Corollary}
\newtheorem{lemma}[definition]{Lemma}

\DeclareMathOperator{\Cov}{Cov}
\begin{document}
\title[Overlapping Window Tests]{Overlapping Window Tests for Correlation and Trend}
\author[A. Alhakim]{Abbas Alhakim\\Department of Mathematics\\
American University of Beirut\\
Beirut, Lebanon\\
}

\email{aa145@aub.edu.lb}

\keywords{sliding-window statistics,  dependence testing, trend detection, spectral decomposition, local asymptotics, scale selection.}

\date{May, 2026.}

\begin{abstract}
We develop a general framework for constructing and analyzing overlapping sliding-window statistics for dependence and trend detection. For a fixed window size, the overlapping blocks form a Markov chain, and the asymptotic variance of any centered window statistic is determined by the covariance operator of this chain. Using its spectral structure, we obtain an orthogonal decomposition of the space of window functions into components associated with different overlap levels. This leads to a natural notion of incremental dependence information: the part of a statistic that captures exactly the new information introduced by enlarging the window.

We give an explicit procedure for extracting these components and apply the method to two classes of examples. For correlation detection, we study symmetric polynomial window functions and identify their informative projected part. For trend detection, we analyze localized rank-based statistics and isolate the contribution of the largest newly introduced lag. The examples also show that different statistics may exhibit different local detection scales, including nonclassical ones. The same viewpoint leads to a natural quantitative measure of incremental information, which can be used to assess how much new dependence structure is captured as the window size increases, and to guide scale selection. Overall, the paper provides a systematic method for designing overlapping-window tests and deriving their asymptotic normalization and local behavior.
\end{abstract}

\maketitle 
\section{Introduction}

Many procedures for detecting dependence, trend, and local structure in sequential
data are naturally formulated through functions of consecutive observations. Given
observations $X_1,X_2,\ldots,X_n$, one considers statistics of the form
\begin{equation*}
\sum_{i=1}^{n-k+1} f(X_i,\ldots,X_{i+k-1}),
\end{equation*}
where $f$ is a measurable function of $k$ consecutive observations. Such
sliding-window statistics are useful because they can detect local structure that
may not be visible through global summaries or marginal information alone.

Sliding-window methods arise in several classical and modern settings, including
overlapping serial tests of randomness \cite{Good1953,LSW2002,gM85},
approximate entropy (ApEn) \cite{Pincus1991,PS96}, and rank-based procedures for
trend detection \cite{Cabilio2013,Kendall1955,Mann1945}. Although these procedures
differ in construction and purpose, they share a common feature: increasing the
window size is intended to capture additional dependence or ordering information.
This raises a basic question. When the window size is enlarged from $k-1$ to $k$,
what part of the statistic represents genuinely new information, and what part
merely repeats structure already detectable using smaller windows?

This paper provides a framework for answering this question for overlapping
sliding-window statistics under an independent null hypothesis. Even when the
underlying observations are independent, the overlapping windows are dependent
because consecutive windows share observations. This overlap induces a covariance
operator, whose spectral structure determines the asymptotic variance of the
statistic. We use this operator to decompose window functions into orthogonal
components and identify the part that is orthogonal to all statistics based on
windows of size $k-1$ or smaller. We interpret this part as the incremental
dependence information introduced by enlarging the window.

A central structural outcome is that the incremental information is exactly the
eigenvalue-one component of the asymptotic covariance operator. This viewpoint
appears implicitly in earlier work on overlapping serial statistics; in particular,
the $L^2$ framework of \cite{AKM} already points in this direction, while the
recent paper \cite{AA2024} develops an adjacent idea for component decomposition of
overlapping Pearson-type statistics. The novelty of the present work is the
systematic isolation of the eigenvalue-one component as the incremental part,
together with a constructive decomposition scheme for statistical construction and
local asymptotic analysis.

The framework allows one to begin with a candidate window function and extract its
informative incremental component by orthogonal projection. It also provides
explicit variance formulas for normalization and leads to a natural scale-selection
criterion: by quantifying the relative size of the incremental component, one may
assess how much genuinely new information is gained as the window size increases.

We illustrate the method in two directions. For correlation detection, we study
symmetric polynomial window functions and show that their incremental component has
a tractable endpoint-interaction structure. For trend detection, we consider
localized rank-based procedures and show that the decomposition isolates the new
contribution introduced by the largest lag within the window. These examples also
show that different window statistics may operate on different local detection
scales, including nonclassical ones, formulated through local alternatives of the
form $\theta_n=\theta_0+c n^{-\gamma}$.

The paper is organized as follows. Section 2 introduces the overlapping-window
setting and the asymptotic variance problem. Section 3 develops the decomposition
and identifies the incremental component. Section 4 applies the method to
correlation and trend statistics and introduces the relative incremental scale
index. Section 5 reports simulation studies for size and local
power, including comparisons between full statistics and their incremental
components. Section 6 concludes with potential future directions.


\section{The Overlapping Framework and Asymptotic Variance Structure}

\subsection{Overlapping window statistics}

Let $\{X_i\}_{i=1}^n$ be a sequence of independent and identically distributed random variables defined on a probability space $(\mathcal{X},\mathcal{F},\nu)$. 
For a fixed integer $k \ge 1$, define the overlapping window process
\[
Y_i = (X_i, X_{i+1}, \dots, X_{i+k-1}), 
\qquad i = 1, \dots, n-k+1.
\]
 Let $\mathcal{H}_k = L^2(\mathcal{X}^k,\pi_k)$ denote the Hilbert space of square-integrable window functions with mean zero under the product measure $\pi_k= \nu^{\otimes k}$, that is, $\mathbb{E}_{\pi_k}[f] = 0$.
For $f\in\mathcal{H}_k$, 
we consider the normalized sliding-window statistic
\begin{equation}\label{E:f_statistic}
S_f(n) = \frac{1}{\sqrt{n-k+1}} \sum_{i=1}^{n-k+1} f(Y_i).
\end{equation}
\noindent
To simplify the exposition in later sections, we will occasionally abuse notation and refer to a function $f$ as a statistic, when we mean the corresponding sliding-window statistic $S_f(n)$.

Because consecutive windows share $k-1$ observations, the sequence $\{Y_i\}$ is not independent. 
Instead, it forms a time-homogeneous Markov chain on $\mathcal{X}^k$. 
This overlap induces nontrivial covariance between terms in the partial sum, and therefore the asymptotic variance of $S_f(n)$ is not simply $\mathrm{Var}(f(Y_1))$.

Under mild moment conditions, a central limit theorem holds for the overlapping-window statistic; see~\cite{AKM} for the present setting, and see, for example, \cite{GordinLifsic1978, MeynTweedie2009} for general central limit theorems for additive functionals of Markov chains:

\[
S_f(n) \;\Longrightarrow\; N(0,\sigma^2(f)),
\]
where the asymptotic variance $\sigma^2(f)$ reflects the dependence induced by overlap. 
The principal inferential challenge is therefore the explicit computation and structural understanding of $\sigma^2(f)$.

\subsection{Covariance operator representation}

A general expression for the asymptotic variance is given by the usual
long-run variance formula for additive functionals of a stationary sequence; see,
for example, \cite{GordinLifsic1978,MeynTweedie2009}. In the present notation this gives
\begin{equation}\label{E:Gen_Asymp_Var}
\sigma^2(f)
=
\operatorname{Var}(f(Y_1))
+
2\sum_{r=1}^{k-1}
\operatorname{Cov}\bigl(f(Y_1),f(Y_{1+r})\bigr).
\end{equation}
Note that the sum is finite because the process $\{Y_i\}$ has finite dependence radius, under the independence of $\{X_i\}$.
Let $P$ denote the transition operator of the Markov chain $\{Y_i\}$, given by
\[
Pf(x_1,\dots,x_k) 
=
\int f(x_2,\dots,x_k,\xi)\,\nu(d\xi).
\]
The chain reaches stationarity in $k$ steps, and its stationary distribution is $\pi_k$. 
The asymptotic variance admits the operator representation, $\sigma^2(f) = \langle B_k f , f \rangle$, as an inner product in $\mathcal H_k$,
where
\[
B_k = I + P + P^* + \cdots + P^{k-1} + (P^*)^{k-1}.
\]

This covariance representation was established in \cite{AKM} in the context of overlapping Markov chains and related nilpotent structures. 
Thus inference for sliding-window statistics reduces to understanding the structure of the covariance operator $B_k$.

\begin{definition}\label{D:S_l}
For an integer $k\ge 1$ and $\ell=1,\dots,k$, define the auxiliary subspace $\mathcal{S}_\ell$ of $\mathcal{H}_k$ 
to consist of all functions of the form
\[
f(x_1,\dots,x_k)=\phi_1(x_1,\dots,x_{k-\ell+1})+\phi_2(x_2,\dots,x_{k-\ell+2})+\cdots+\phi_\ell(x_\ell,\dots,x_k),
\]
where, for each $j=1,\dots,\ell$, the function $\phi_j\in L^2(X^{k-\ell+1},\pi_{k-\ell+1})$
satisfies
\[
\int_\mathcal{X} \phi_j(x_1,\dots,x_{k-\ell+1})\,\nu(dx_1)=0,
\qquad
\int_\mathcal{X} \phi_j(x_1,\dots,x_{k-\ell+1})\,\nu(dx_{k-\ell+1})=0.
\]
\end{definition}

\begin{remark}
Under the product measure, the spaces $S_l$ are mutually orthogonal. 
Indeed, if $\ell\neq \ell'$, then in any product of a summand from $S_\ell$ with one from $S_{\ell'}$, there is an endpoint variable of the longer block that appears only in one factor; the side-centering condition makes that factor mean-zero in this variable, so integrating it out forces the inner product to vanish.
\end{remark}

\noindent
The following is Theorem 2 of \cite{AKM}.
\begin{theorem}\label{T:AKM}
For each $\ell=1,\dots,k$, the space $\mathcal{S}_\ell$ is invariant under $B_k$ and contains the
eigenspace $\mathcal{L}_{\ell}$ corresponding to the eigenvalue $\ell$. Moreover, $\mathcal{S}_1=\mathcal{L}_1$, and for
$\ell>1$,
\[
\mathcal{S}_\ell = \mathcal{L}_\ell \oplus (\mathcal{S}_\ell\cap \ker B_k).
\]
In particular, every function in $\mathcal{S}_\ell$ decomposes uniquely into an $\mathcal{L}_\ell$-component
and a kernel component.

Furthermore, for a function $f$  in $\mathcal{S}_\ell,\;\ell>1$, the component of $f$ in $\mathcal{L}_\ell$ is given by 
\[
\displaystyle\mathlarger{f}_\ell(x_1,\ldots,x_k)=\phi(x_1,\ldots,x_{k-\ell+1})+\phi(x_2,\ldots,x_{k-\ell+2})+\ldots+\phi(x_\ell,\ldots,x_k),
\]
\noindent where 
$\phi(x_1,\ldots,x_{k-\ell+1})=\displaystyle\frac{\phi_1(x_1,\ldots,x_{k-\ell+1})+\ldots+\phi_\ell(x_1,\ldots,x_{k-\ell+1})}\ell$, and the marginal functions $\phi_j$ are as in Definition~\ref{D:S_l}.
\end{theorem}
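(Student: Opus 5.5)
The plan is to compute $B_k$ explicitly on a single summand of $\mathcal{S}_\ell$ and reduce everything to a finite-dimensional circulant-type computation. Fix $\ell$ and set $m=k-\ell+1$. For a side-centered $\phi\in L^2(\mathcal{X}^m,\pi_m)$, write $T_j\phi(x_1,\dots,x_k)=\phi(x_j,\dots,x_{j+m-1})$ for $j=1,\dots,\ell$. Then $\mathcal{S}_\ell$ is the set of sums $\sum_j T_j\phi_j$.

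First I will compute $P$ and $P^*$ on these shifted functions. Directly from the formula, $P T_j\phi=T_{j-1}\phi$ for $j\ge 2$. For $j=1$, the first variable is integrated out, so $PT_1\phi=0$ by the side-centering condition. Under the product measure, the adjoint is $P^*g(x_1,\dots,x_k)=\int g(\xi,x_1,\dots,x_{k-1})\,\nu(d\xi)$. Hence $P^*T_j\phi=T_{j+1}\phi$ for $j<\ell$, and $P^*T_\ell\phi=0$, this time by centering in the last variable. Thus $P$ and $P^*$ act as the down and up shifts on the index $j\in\{1,\dots,\ell\}$, truncated at the ends. Since $\ell\le k$, all powers up to $k-1$ behave the same way, with $P^rT_j\phi=T_{j-r}\phi$ or $0$. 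Summing gives
\[
B_k\sum_j T_j\phi_j=\sum_{j}T_j\Bigl(\sum_{i=1}^{\ell}\phi_i\Bigr)=\ell\,T_\bullet\bar\phi,
\]
where $\bar\phi=\frac1\ell\sum_i\phi_i$ and $T_\bullet\psi:=\sum_{j=1}^{\ell}T_j\psi$. In words, each $T_i\phi_i$ is spread to all positions $1,\dots,\ell$ with coefficient one. This immediately shows that $\mathcal{S}_\ell$ is invariant under $B_k$.

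From this formula the remaining claims are linear algebra on the index matrix $J$, the all-ones $\ell\times\ell$ matrix, tensored with the $\phi$-space.
\begin{itemize}
\item \emph{Eigenvalue $\ell$.} Functions $T_\bullet\psi$ satisfy $B_kT_\bullet\psi=\ell T_\bullet\psi$, so they lie in $\mathcal{L}_\ell$.
\item \emph{Kernel.} A function $\sum_jT_j\phi_j$ with $\sum_j\phi_j=0$ lies in $\ker B_k$.
\item \emph{Decomposition.} Any $f=\sum_jT_j\phi_j$ splits as
\[
f=T_\bullet\bar\phi+\sum_jT_j(\phi_j-\bar\phi),
\]
with the first term in $\mathcal{L}_\ell$ and the second in the kernel. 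This gives $\mathcal{S}_\ell=(\mathcal{S}_\ell\cap\mathcal{L}_\ell)+(\mathcal{S}_\ell\cap\ker B_k)$, and the sum is direct because the eigenvalues $\ell\neq0$ differ.
\item \emph{The projection formula.} The $\mathcal{L}_\ell$-component is $T_\bullet\bar\phi$, which is exactly the stated $f_\ell$. It is well defined even though the representation $(\phi_j)$ may not be unique, since $B_kf=\ell f_\ell$ determines $f_\ell$.
\item \emph{The case $\ell=1$.} Here $\mathcal{S}_1$ consists of doubly centered functions on which $B_k$ acts as the identity, so $\mathcal{S}_1\subseteq\mathcal{L}_1$.
\end{itemize}

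The main obstacle is the claim that $\mathcal{S}_\ell$ contains the whole eigenspace $\mathcal{L}_\ell$, i.e.\ $\mathcal{L}_\ell\subseteq\mathcal{S}_\ell$ (and $\mathcal{L}_1\subseteq\mathcal{S}_1$). For this I would show that $\mathcal{H}_k=\bigoplus_{\ell=1}^k\mathcal{S}_\ell$, using the orthogonality from the Remark. To get completeness, I would use the Hoeffding/ANOVA decomposition of $L^2(\pi_k)$ into components depending on subsets $A\subseteq\{1,\dots,k\}$ with full degenerate centering. Each such component belongs to $\mathcal{S}_\ell$ with $k-\ell+1=\max A-\min A+1$: it is a function of a contiguous block that is centered at both endpoints.

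Given this direct sum, each $\mathcal{S}_\ell$ is invariant, $B_k$ is self-adjoint, and on $\mathcal{S}_\ell$ its only eigenvalues are $\ell$ and $0$. Hence an eigenvector with eigenvalue $\ell>0$ must have zero components in every $\mathcal{S}_{\ell'}$ with $\ell'\neq\ell$, so it lies in $\mathcal{S}_\ell$. Uniqueness of the decomposition then follows.
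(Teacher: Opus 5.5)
Your argument is correct. It differs from the paper simply because the paper gives no proof of this statement: Theorem~\ref{T:AKM} is quoted as Theorem 2 of \cite{AKM}, and the integer spectrum of $B_k$ is separately quoted from \cite{AA04,AKM}.

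Your route is self-contained and works in two steps.
\begin{enumerate}
\item The shift action of $P$ and $P^*$ on the blocks $T_j\phi$ gives $B_k\sum_jT_j\phi_j=\sum_jT_j\bigl(\sum_i\phi_i\bigr)$. From this, invariance, the split into $T_\bullet\bar\phi\in\mathcal L_\ell$ plus a kernel element, and the projection formula (via $f_\ell=\ell^{-1}B_kf$) all follow at once.
\item The Hoeffding decomposition supplies completeness. Each component space $H_A$ with $A\neq\emptyset$ lies in $\mathcal S_\ell$ for $k-\ell=\max A-\min A$. Combined with the orthogonality in the Remark, this gives $\mathcal H_k=\bigoplus_\ell\mathcal S_\ell$, and hence $\mathcal L_\ell\subseteq\mathcal S_\ell$.
\end{enumerate}
This proves more than the statement. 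Since $B_k(B_k-\ell)=0$ on $\mathcal S_\ell$, you also recover that the spectrum of $B_k$ lies in $\{0,1,\dots,k\}$. You also get existence and uniqueness of the auxiliary decomposition in Theorem~\ref{T:projection}, though not its explicit telescoping formulas. The citation keeps the paper short but hides the mechanism.

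Two small corrections.
\begin{enumerate}
\item With the paper's $Pf(x_1,\dots,x_k)=\int f(x_2,\dots,x_k,\xi)\,\nu(d\xi)$, you have interchanged $P$ and $P^*$. In fact $PT_j\phi=T_{j+1}\phi$ for $j<\ell$, with $PT_\ell\phi=0$ by right-centering. Likewise $P^*T_j\phi=T_{j-1}\phi$ for $j\ge 2$, with $P^*T_1\phi=0$ by left-centering. The slip is harmless: $B_k$ contains $P^r$ and $(P^*)^r$ symmetrically for all $r\le k-1$, and $\ell-1\le k-1$, so your formula for $B_k$ on $\mathcal S_\ell$ is unchanged.
\item Your hedge that the representation $(\phi_j)$ may not be unique is unnecessary; it is unique. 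Integrating out $x_1$ kills $T_1\phi_1$ and leaves the other summands unchanged, so one can induct. Equivalently, the Hoeffding index sets $\{A:\min A=j,\ \max A=j+k-\ell\}$ are disjoint for different $j$. The identity $f_\ell=\ell^{-1}B_kf$ is still a neat way to see the projection formula.
\end{enumerate}
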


\subsection{Spectral structure and levels of dependence}

A key structural fact, proved in \cite{AA04,AKM}, is that the operator $B_k$ has a remarkably simple spectrum: its only eigenvalues are the integers
\[
0,1,2,\dots,k.
\]
Since $B_k$ is self-adjoint, the Hilbert space $\mathcal{H}_k$ admits the orthogonal decomposition
\[
\mathcal{H}_k=\mathcal{L}_0\oplus\mathcal{L}_1\oplus\cdots\oplus \mathcal{L}_k,
\]

where each eigenspace $\mathcal{L}_\ell$ corresponds to eigenvalue $\ell$ and is associated to a distinct level of dependence induced by window overlap.
Thus every $f\in \mathcal{H}_k$ admits the unique decomposition
\[
f=f_0+f_1+\cdots+f_k,
\]
\noindent
where $f_\ell\in\mathcal{L}_\ell$.
Because $B_k$ acts diagonally on this decomposition, the asymptotic variance decomposes additively:
\begin{equation}\label{E:variance}
\sigma^2(f)
=
\sum_{\ell=1}^k \ell \, \|f_\ell\|^2,
\end{equation}
\noindent
where $f_\ell$ denotes the orthogonal projection of $f$ onto $\mathcal{ L}_\ell$.


\section{Systematic Decomposition and Extraction of Incremental Dependence}

To obtain the components $f_\ell$ explicitly, it is convenient to make use of the orthogonal auxiliary
subspaces $\mathcal{S}_\ell$ given in Definition~\ref{D:S_l}. Each $\mathcal{S}_\ell$ contains $\mathcal{L}_\ell$ and differs from it only by a possible
kernel contribution:
\[
\mathcal{S}_\ell=\mathcal{L}_\ell\oplus\mathcal{L}_0^{(\ell)},\qquad \mathcal{L}_0^{(\ell)}:=\mathcal{S}_\ell\cap \ker(B_k).
\]
Thus the spaces $\mathcal{S}_\ell$ provide a constructive realization of the spectral decomposition.

\subsection{A Constructive auxiliary decomposition}\label{Ss:auxiliary_Decomp}

To achieve this decomposition, we integrate $f$ over successive boundary variables.
For $2 \le \ell \le k$, define the marginal integrals
\[
f_\ell^{(1)}(x_1,\dots,x_{k-\ell+1})
=
\int f(x_1,\dots,x_k)\,\nu(dx_{k-\ell+2})\cdots\nu(dx_k),
\]
\[
f_\ell^{(2)}(x_2,\dots,x_{k-\ell+2})
=
\int f(x_1,\dots,x_k)\,\nu(dx_1)\nu(dx_{k-\ell+3})\cdots\nu(dx_k),
\]
and so forth, integrating over all but $k-\ell+1$ consecutive variables.

For $\ell=1$, define $f_1^{(1)}= f$.

The next theorem gives a successive construction of the auxiliary decomposition.

\begin{theorem}\label{T:projection}
Let $f\in \mathcal{H}_k$. For each $\ell=1,\dots,k$, define
$\displaystyle
s_\ell:=\sum_{i=1}^\ell g_\ell^{(i)},
$
where
\[
g_k^{(i)}=f_k^{(i)},
\]
\[
g_{k-1}^{(i)}(x_i,x_{i+1})
=
f_{k-1}^{(i)}(x_i,x_{i+1})-f_k^{(i)}(x_i)-f_k^{(i+1)}(x_{i+1}),
\]
and, for $1\le \ell\le k-2$,
\[
g_\ell^{(i)}(x_i,\dots,x_{i+k-\ell})
=
f_\ell^{(i)}(x_i,\dots,x_{i+k-\ell})
+f_{\ell+2}^{(i+1)}(x_{i+1},\dots,x_{i+k-\ell-1})
\]
\[
\hspace{4cm}
-f_{\ell+1}^{(i)}(x_i,\dots,x_{i+k-\ell-1})
-f_{\ell+1}^{(i+1)}(x_{i+1},\dots,x_{i+k-\ell}).
\]
Then $s_\ell\in \mathcal S_\ell$ for each $\ell$, and we get the auxiliary decomposition
\[
f=s_1+s_2+\cdots+s_k.
\]
Consequently, writing
\[
s_\ell=f_\ell+f_0^{(\ell)},
\qquad
f_\ell\in \mathcal L_\ell,\quad f_0^{(\ell)}\in \mathcal L_0^{(\ell)},
\]
one obtains the orthogonal spectral decomposition
\[
f=f_0+f_1+\cdots+f_k,
\qquad
f_0=\sum_{\ell=1}^k f_0^{(\ell)}.
\]
In particular, since $\mathcal S_1=\mathcal L_1$, one has $s_1=f_1$.
\end{theorem}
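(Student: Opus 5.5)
The plan is to recognize each $g_\ell^{(i)}$ as a \emph{two-sided centering} of the marginal $f_\ell^{(i)}$, verify membership in $\mathcal S_\ell$ directly, prove $f=\sum_\ell s_\ell$ by a telescoping count, and then read off the spectral statement from Theorem~\ref{T:AKM} and the orthogonality of the $\mathcal S_\ell$.

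\emph{Step 1: membership.} The marginal $f_\ell^{(i)}$ depends on the $k-\ell+1$ consecutive variables $x_i,\dots,x_{i+k-\ell}$. Integrating it over its left endpoint $x_i$ gives $f_{\ell+1}^{(i+1)}$. Integrating over its right endpoint gives $f_{\ell+1}^{(i)}$. Integrating over both gives $f_{\ell+2}^{(i+1)}$.

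Hence, for $1\le\ell\le k-2$,
\[
g_\ell^{(i)}=(I-E_{\mathrm{left}})(I-E_{\mathrm{right}})f_\ell^{(i)},
\]
where $E_{\mathrm{left}}$ and $E_{\mathrm{right}}$ denote integration over the endpoint variables. Since these operators are commuting idempotents, $E_{\mathrm{left}}g_\ell^{(i)}=0$ and $E_{\mathrm{right}}g_\ell^{(i)}=0$. These are exactly the side-centering conditions of Definition~\ref{D:S_l}, with $\phi_i=g_\ell^{(i)}$ after reindexing variables.

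The case $\ell=k-1$ is the same formula, since the doubly integrated term is $\mathbb E_{\pi_k}f=0$. For $\ell=k$, each $g_k^{(i)}=f_k^{(i)}$ is a function of one variable with mean $\mathbb E f=0$, so both conditions reduce to centering.

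\emph{Step 2: the identity $f=\sum_\ell s_\ell$.} This is the main bookkeeping step. Adopt the conventions $f_{k+1}^{(\cdot)}=f_{k+2}^{(\cdot)}=0$, which are consistent with the stated formulas for $g_{k-1}$ and $g_k$. Expand $\sum_{\ell=1}^k\sum_{i=1}^\ell g_\ell^{(i)}$ and collect the coefficient of each $f_m^{(j)}$, $1\le j\le m$:
\begin{itemize}
\item $+1$ from level $\ell=m$;
\item $-1$ from level $\ell=m-1$ via index $i=j$, when $j\le m-1$;
\item $-1$ from level $\ell=m-1$ via index $i=j-1$, when $j\ge 2$;
\item $+1$ from level $\ell=m-2$ via index $i=j-1$, when $2\le j\le m-1$.
\end{itemize}
For interior $j$ this gives $1-2+1=0$. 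For $j=1$ or $j=m$ with $m\ge2$ it gives $1-1=0$. For $m=1$ the only term is $f_1^{(1)}=f$, with coefficient $1$. So the sum collapses to $f$.

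The only real obstacle is getting the index ranges right at the boundary levels $\ell=k-1,k$. I will check these separately against the special definitions, rather than through the general formula.

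\emph{Step 3: spectral decomposition.} By Theorem~\ref{T:AKM}, each $s_\ell$ splits uniquely as $f_\ell+f_0^{(\ell)}$ with $f_\ell\in\mathcal L_\ell$ and $f_0^{(\ell)}\in\ker B_k$, and $\mathcal S_1=\mathcal L_1$ gives $s_1=f_1$. Summing, $f=f_0+\sum_\ell f_\ell$ with $f_0\in\ker B_k=\mathcal L_0$.

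To see that these are the orthogonal projections, use the Remark: the spaces $\mathcal S_{\ell'}$ are mutually orthogonal. Since $\mathcal L_\ell\subset\mathcal S_\ell$, we get $s_{\ell'}\perp\mathcal L_\ell$ for $\ell'\neq\ell$, and $f_0^{(\ell)}\perp\mathcal L_\ell$ because they lie in different eigenspaces of the self-adjoint $B_k$. Hence the projection of $f$ onto $\mathcal L_\ell$ is $f_\ell$, and uniqueness follows from the direct sum $\mathcal H_k=\bigoplus_\ell\mathcal L_\ell$.
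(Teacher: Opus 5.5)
Your proposal is correct and follows essentially the same route as the paper. The Fubini identities for the left and right marginals give the side-centering, and your $(I-E_{\mathrm{left}})(I-E_{\mathrm{right}})$ packaging is exactly that computation. The identity $f=\sum_\ell s_\ell$ is the same telescoping cancellation, which the paper organizes as an induction on the partial sums $\sum_{m\le \ell} s_m$ rather than as your coefficient count. The spectral conclusion is then read off from Theorem~\ref{T:AKM}, as in the paper. Your extra check in Step~3 that each $f_\ell$ is the orthogonal projection onto $\mathcal L_\ell$ makes explicit a point the paper leaves implicit; it also follows at once from $\mathcal H_k=\bigoplus_\ell \mathcal L_\ell$ once each piece lies in its eigenspace.
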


\medskip

\begin{proof}
For $2\le \ell\le k$, integration in the definition of $f_\ell^{(i)}$ is over $\mathcal{X}^{\,\ell-1}$.
By Fubini's theorem, for $\ell\le k-1$ we have
\begin{equation}\label{eq:left-marginal}
\int_\mathcal{X} f_\ell^{(i)}(x_i,\dots,x_{i+k-\ell})\,\nu(dx_i)
=
f_{\ell+1}^{(i+1)}(x_{i+1},\dots,x_{i+k-\ell}),
\end{equation}
and
\begin{equation}\label{eq:right-marginal}
\int_\mathcal{X} f_\ell^{(i)}(x_i,\dots,x_{i+k-\ell})\,\nu(dx_{i+k-\ell})
=
f_{\ell+1}^{(i)}(x_i,\dots,x_{i+k-\ell-1}).
\end{equation}

Since $E_{\pi_k}[f]=0$, it follows immediately that $\displaystyle\int_\mathcal{X} f_k^{(i)}(x_i)\,\nu(dx_i)=0$,

so that $\mathlarger{s}_k=\displaystyle\sum_{i=1}^k f_k^{(i)}$ belongs to $\mathcal S_k$.

Next, applying \eqref{eq:left-marginal} and \eqref{eq:right-marginal} to the definition of
$g_\ell^{(i)}$, we obtain for $\ell\le k-1$
\[
\int_\mathcal{X} g_\ell^{(i)}(x_i,\dots,x_{i+k-\ell})\,\nu(dx_i)=0,
\qquad
\int_\mathcal{X} g_\ell^{(i)}(x_i,\dots,x_{i+k-\ell})\,\nu(dx_{i+k-\ell})=0.
\]
Hence each $g_\ell^{(i)}$ has vanishing left and right marginals on its supporting block,
and therefore
\[
s_\ell=\sum_{i=1}^\ell g_\ell^{(i)}\in \mathcal S_\ell,
\qquad \ell=1,\dots,k.
\]

It remains to prove that the $s_\ell$ sum to $f$. For $1\le \ell\le k-2$, let
\begin{equation}\label{eq:telescoping-claim}
\sum_{m=1}^\ell s_m
=
f-\sum_{j=1}^{\ell+1} f_{\ell+1}^{(j)}+\sum_{j=2}^{\ell+1} f_{\ell+2}^{(j)}.
\end{equation}
For $\ell=1$, this is exactly the formula for $s_1$. Assume \eqref{eq:telescoping-claim}
holds for some $\ell<k-2$. Using the equivalent representation
\[
s_{\ell+1}
=
\sum_{j=1}^{\ell+1} f_{\ell+1}^{(j)}
-\sum_{j=1}^{\ell+2} f_{\ell+2}^{(j)}
-\sum_{j=2}^{\ell+1} f_{\ell+2}^{(j)}
+\sum_{j=2}^{\ell+2} f_{\ell+3}^{(j)},
\]
we get
\[
\sum_{m=1}^{\ell+1} s_m
=
\left(
f-\sum_{j=1}^{\ell+1} f_{\ell+1}^{(j)}+\sum_{j=2}^{\ell+1} f_{\ell+2}^{(j)}
\right)
+
\left(
\sum_{j=1}^{\ell+1} f_{\ell+1}^{(j)}
-\sum_{j=1}^{\ell+2} f_{\ell+2}^{(j)}
-\sum_{j=2}^{\ell+1} f_{\ell+2}^{(j)}
+\sum_{j=2}^{\ell+2} f_{\ell+3}^{(j)}
\right),
\]
which simplifies to
\[
\sum_{m=1}^{\ell+1} s_m
=
f-\sum_{j=1}^{\ell+2} f_{\ell+2}^{(j)}+\sum_{j=2}^{\ell+2} f_{\ell+3}^{(j)}.
\]
This proves \eqref{eq:telescoping-claim} by induction.

Finally, substituting $\ell=k-2$ into \eqref{eq:telescoping-claim} and then adding the
explicit expressions for $s_{k-1}$ and $s_k$, we obtain $\displaystyle\sum_{l=1}^k s_\ell=f$.

Thus $f$ is decomposed as a sum of elements $s_\ell\in\mathcal S_\ell$. Theorem~\ref{T:AKM} states that each
$\mathcal S_\ell$ splits as
\[
\mathcal S_\ell=\mathcal L_\ell\oplus \mathcal L_0^{(\ell)},
\]
For some $\mathcal L_0^{(\ell)}\subset Ker(B_k)$.  This theorem also shows that, for each $\ell>1$, 
\[
s_\ell=f_\ell+f_0^{(\ell)},
\qquad
f_\ell\in \mathcal L_\ell,\quad f_0^{(\ell)}\in \mathcal L_0^{(\ell)}.
\]
Summing over $\ell$ yields
\[
f=f_0+f_1+\cdots+f_k,
\qquad
f_0=\sum_{\ell=1}^k f_0^{(\ell)}.
\]
Since $\mathcal S_1=\mathcal L_1$, the first constructed component satisfies 
$s_1=f_1$.
\end{proof}

\begin{definition}
A function $f(x_1,\dots,x_k)$ is called symmetric (permutation invariant) if
\[
f(x_{\sigma(1)},\dots,x_{\sigma(k)})=f(x_1,\dots,x_k)
\]
for every permutation $\sigma$ of $\{1,\dots,k\}$.
\end{definition}

The next result shows that for the class of symmetric functions, the auxiliary decomposition already coincides with the spectral one.

\begin{theorem}\label{prop:perm-invariant}
Let $f(x_1,\dots,x_k)$ be a symmetric function. Then, the
auxiliary decomposition of $f$ coincides with the spectral decomposition. That is, for each $\ell=1,\dots,k$, $s_\ell$ is an eigenfunction of eigenvalue $\ell$. In particular, $f$ is orthogonal to the kernel of $B_k$.
\end{theorem}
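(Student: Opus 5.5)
The proof will use the explicit form of the $\mathcal L_\ell$-component in Theorem~\ref{T:AKM}. For $s_\ell=\sum_{i=1}^\ell g_\ell^{(i)}\in\mathcal S_\ell$ with $\ell>1$, that theorem gives the $\mathcal L_\ell$-part as $\sum_{i=1}^\ell \phi(x_i,\dots,x_{i+k-\ell})$, where $\phi$ is the average $\frac1\ell\sum_j g_\ell^{(j)}$ with each $g_\ell^{(j)}$ viewed as a function on $\mathcal X^{k-\ell+1}$ after relabelling its arguments. So it suffices to show the following: when $f$ is symmetric, all the block functions $g_\ell^{(1)},\dots,g_\ell^{(\ell)}$ coincide as functions of $k-\ell+1$ variables, say all equal to some $g_\ell$. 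Then $\phi=g_\ell$, the $\mathcal L_\ell$-component of $s_\ell$ is $s_\ell$ itself, and the kernel part $f_0^{(\ell)}$ vanishes. For $\ell=1$ there is nothing to prove, since $\mathcal S_1=\mathcal L_1$. Summing over $\ell$ then gives $f_0=\sum_\ell f_0^{(\ell)}=0$, so $f$ is orthogonal to $\ker B_k$.

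The key step is that the marginal integrals $f_\ell^{(i)}$ do not depend on $i$ once their arguments are relabelled. By definition, $f_\ell^{(i)}(y_1,\dots,y_{k-\ell+1})$ is obtained by placing $y_1,\dots,y_{k-\ell+1}$ in the coordinates $i,\dots,i+k-\ell$ of $f$ and integrating the remaining $\ell-1$ coordinates against the product measure $\nu^{\otimes(\ell-1)}$. Since $f$ is symmetric, we may permute coordinates so that the free variables occupy positions $1,\dots,k-\ell+1$. The integrated coordinates are i.i.d.\ under a product measure, so by Fubini their order is irrelevant. Hence $f_\ell^{(i)}=f_\ell^{(1)}=:F_\ell$ for every $i$, and each $F_\ell$ is itself a symmetric function.

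Next I substitute into the formulas of Theorem~\ref{T:projection}. For $\ell\le k-2$,
\[
g_\ell^{(i)}(y_1,\dots,y_{k-\ell+1})=F_\ell(y_1,\dots,y_{k-\ell+1})+F_{\ell+2}(y_2,\dots,y_{k-\ell})-F_{\ell+1}(y_1,\dots,y_{k-\ell})-F_{\ell+1}(y_2,\dots,y_{k-\ell+1}),
\]
and the right-hand side is independent of $i$. The cases $\ell=k-1$ and $\ell=k$ work the same way. So every $g_\ell^{(i)}$ equals a common $g_\ell$, and the conclusion of the first paragraph follows.

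The main obstacle is bookkeeping rather than substance. One has to check carefully that the relabelling in the definition of $f_\ell^{(i)}$ matches the convention in Theorem~\ref{T:AKM}, namely that $\phi_j$ is read as a function on $\mathcal X^{k-\ell+1}$ evaluated at shifted arguments. One also has to check that the kernel summand in the decomposition $\mathcal S_\ell=\mathcal L_\ell\oplus(\mathcal S_\ell\cap\ker B_k)$ is exactly $s_\ell$ minus its $\mathcal L_\ell$-part, so that equality of the $\phi_j$ forces that summand to be zero.
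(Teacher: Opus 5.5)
Your proposal is correct and follows essentially the paper's proof: symmetry plus Fubini gives $f_\ell^{(i)}=f_\ell^{(1)}$ after relabelling, and then the averaging formula in Theorem~\ref{T:AKM} shows that $s_\ell$ equals its own $\mathcal L_\ell$-component, so every kernel part vanishes. You also check explicitly that the block functions $g_\ell^{(i)}$, which are the $\phi_j$ actually fed into Theorem~\ref{T:AKM}, are independent of $i$; the paper leaves this step implicit, and your handling of it is correct.
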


\begin{proof}
Fix $\ell$ and $i$ with $1\le i\le \ell\le k$. By definition,
\[
f_\ell^{(i)}(x_i,\dots,x_{i+k-\ell})
=
\int_{\mathcal{X}^{\,\ell-1}}
f(x_1,\dots,x_k)\,
\nu(dx_1)\cdots \nu(dx_{i-1})\,\nu(dx_{i+k-\ell+1})\cdots \nu(dx_k).
\]
Define a permutation $\sigma$ of $\{1,\dots,k\}$ by
\[
\sigma(j)=
\begin{cases}
i+j-1, & 1\le j\le k-\ell+1,\\[4pt]
j-k+\ell-1, & k-\ell+2\le j\le k-\ell+i,\\[4pt]
j, & k-\ell+i+1\le j\le k.
\end{cases}
\]
Since $f$ is permutation invariant,
\[
f(x_1,\dots,x_k)=f(x_{\sigma(1)},\dots,x_{\sigma(k)}).
\]
Substituting this into the above integral and observing that the integrated variables are
exactly those outside the block $(x_i,\dots,x_{i+k-\ell})$, we obtain
\[
f_\ell^{(i)}(x_i,\dots,x_{i+k-\ell})=f_\ell^{(1)}(x_i,\dots,x_{i+k-\ell}).
\]
Thus $f_\ell^{(i)}$ is independent of $i$. The first claim follows by  the last assertion of  Theorem~\ref{T:AKM}. The second claim follows by Theorem~\ref{T:projection}.

\end{proof}

The next corollary follows immediately from the preceding theorem and the spectral variance formula. It explains why symmetric functions are especially convenient in practice, since in this case the auxiliary and spectral components coincide.

\begin{corollary}\label{C:perm_invariant}
Let $f\in \mathcal H_k$ be symmetric, and let $\displaystyle
f=\sum_{\ell=1}^k s_\ell$
be its auxiliary decomposition.
Then its asymptotic variance is given by $\displaystyle\sigma^2(f)=\sum_{\ell=1}^k \ell\,\|s_\ell\|^2$.
\end{corollary}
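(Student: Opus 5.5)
The plan is to combine Theorem~\ref{prop:perm-invariant} with the spectral variance formula \eqref{E:variance}. First, apply Theorem~\ref{T:projection} to the symmetric $f\in\mathcal H_k$ to get the auxiliary decomposition $f=s_1+\cdots+s_k$ with $s_\ell\in\mathcal S_\ell$. Then Theorem~\ref{prop:perm-invariant} says that each $s_\ell$ already lies in the eigenspace $\mathcal L_\ell$. In the notation of Theorem~\ref{T:projection}, this means $s_\ell=f_\ell$ and $f_0^{(\ell)}=0$ for every $\ell$, so $f_0=0$.

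Next, I use uniqueness of the spectral decomposition. Since $\mathcal H_k=\mathcal L_0\oplus\cdots\oplus\mathcal L_k$ is an orthogonal direct sum, the representation $f=\sum_\ell f_\ell$ with $f_\ell\in\mathcal L_\ell$ is unique. Because $f=\sum_\ell s_\ell$ is such a representation (with zero $\mathcal L_0$-part), the orthogonal projection of $f$ onto $\mathcal L_\ell$ is exactly $s_\ell$ for $\ell\ge 1$, and the projection onto $\mathcal L_0$ vanishes.

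Finally, I substitute into \eqref{E:variance}, $\sigma^2(f)=\sum_{\ell=1}^k \ell\|f_\ell\|^2$, to obtain $\sigma^2(f)=\sum_{\ell=1}^k\ell\|s_\ell\|^2$. As a check, the same result follows directly: $\langle B_kf,f\rangle=\langle\sum_\ell \ell s_\ell,\sum_m s_m\rangle=\sum_\ell \ell\|s_\ell\|^2$, using that the $s_\ell$ are eigenvectors and that the $\mathcal S_\ell$ are mutually orthogonal.

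There is no real obstacle here. The only point that needs care is the identification of $s_\ell$ with the projection $f_\ell$, and that is settled by the uniqueness of the orthogonal spectral decomposition.
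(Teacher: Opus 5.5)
Your proposal is correct and follows the same route as the paper. The paper likewise combines Theorem~\ref{prop:perm-invariant} (each $s_\ell$ lies in $\mathcal L_\ell$, so $s_\ell=f_\ell$ and $f_0=0$) with the spectral variance formula \eqref{E:variance}; your appeal to uniqueness of the orthogonal eigenspace decomposition and the direct check $\langle B_kf,f\rangle=\sum_\ell \ell\|s_\ell\|^2$ simply spell out the step the paper calls immediate.
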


\subsection{Incremental Component}

As mentioned in the introduction, we seek to isolate the
information introduced by enlarging the window size. 

\noindent
\begin{definition} (Incremental Dependence Information)
For $f \in \mathcal{H}_k$, the incremental dependence information at window size $k$ is defined as the orthogonal projection
\[
f^{\mathrm{inc}} := \mathrm{Proj}_{\mathcal{ L}_1} f.
\]

Equivalently, $f^{\mathrm{inc}}$ is the unique element in $\mathcal{ L}_1$ such that
\[
\langle f - f^{\mathrm{inc}}, h \rangle = 0
\qquad
\text{for all } h \in \mathcal{ L}_1.
\]
\end{definition}

By the spectral variance formula (\ref{E:variance}), the variance contribution of this incremental component is
$
\sigma^2(f^{\mathrm{inc}})=\|f^{\mathrm{inc}}\|^2$, 
which quantifies the amount of dependence detectable uniquely at span $k$.


\subsection*{An intrinsic characterization of the incremental subspace}

Define the conditional expectation operators
\[
(\mathsf E_L f)(x_1,\dots,x_{k-1})=\int f(x_1,\dots,x_k)\,\nu(dx_k),
\\
(\mathsf E_R f)(x_2,\dots,x_k)=\int f(x_1,\dots,x_k)\,\nu(dx_1),
\]
and let
\[
\mathcal{W}_{k-1}
=
\left\{
u(x_1,\dots,x_{k-1})+v(x_2,\dots,x_k)
:\ u,v\in L^2(\mathcal{X}^{k-1},\pi_{k-1})
\right\}.
\]

\begin{proposition}[Incremental subspace as an orthogonal complement]
For $f\in\mathcal{H}_k$, the following are equivalent:
\begin{enumerate}
\item $f \in \mathcal{W}_{k-1}^{\perp}$;
\item $\mathsf E_L f = 0$ and $\mathsf E_R f = 0$ (in $L^2$);
\item For $\pi_k$-a.e.\ $(x_1,\dots,x_{k-1})$ and $(x_2,\dots,x_k)$,
\[
\int f(x_1,\dots,x_{k-1},x_k)\,\nu(dx_k)=0,
\qquad
\int f(x_1,x_2,\dots,x_k)\,\nu(dx_1)=0.
\]
\end{enumerate}
\end{proposition}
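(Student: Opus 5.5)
The plan is to prove the cycle (1)$\Leftrightarrow$(2) by identifying the adjoints of the natural embeddings, and then to treat (2)$\Leftrightarrow$(3) as a restatement. Define the embeddings $J_L,J_R:L^2(\mathcal X^{k-1},\pi_{k-1})\to L^2(\mathcal X^k,\pi_k)$ by $(J_Lu)(x_1,\dots,x_k)=u(x_1,\dots,x_{k-1})$ and $(J_Rv)(x_1,\dots,x_k)=v(x_2,\dots,x_k)$. By construction $\mathcal W_{k-1}=\operatorname{ran}J_L+\operatorname{ran}J_R$.

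The key computation is that $J_L^*=\mathsf E_L$ and $J_R^*=\mathsf E_R$. For $f\in L^2(\pi_k)$ and $u\in L^2(\pi_{k-1})$, Fubini applied to the product measure $\pi_k=\pi_{k-1}\otimes\nu$ gives
\[
\langle f,J_Lu\rangle=\int u(x_1,\dots,x_{k-1})\Bigl(\int f\,\nu(dx_k)\Bigr)\pi_{k-1}(dx_1\cdots dx_{k-1})=\langle \mathsf E_Lf,u\rangle_{L^2(\pi_{k-1})}.
\]
The product $fu$ is integrable by Cauchy--Schwarz, and $\mathsf E_Lf\in L^2(\pi_{k-1})$ by Jensen's inequality, so Fubini is justified. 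The same argument with $\pi_k=\nu\otimes\pi_{k-1}$ handles $J_R$.

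With the adjoints in hand, the equivalence (1)$\Leftrightarrow$(2) follows directly. We have $f\perp\mathcal W_{k-1}$ if and only if $\langle f,J_Lu\rangle=0$ and $\langle f,J_Rv\rangle=0$ for all $u,v$. This holds if and only if $\langle\mathsf E_Lf,u\rangle=0$ and $\langle\mathsf E_Rf,v\rangle=0$ for all $u,v$. Taking $u=\mathsf E_Lf$ and $v=\mathsf E_Rf$ shows this is equivalent to $\mathsf E_Lf=0$ and $\mathsf E_Rf=0$ in $L^2$. The "if" direction is immediate from the displayed identity. Here $\mathcal W_{k-1}$ need not be closed, but orthogonality to a set is the same as orthogonality to its closure, so no closure issue arises. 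Note also that (2) forces $\mathbb E_{\pi_k}f=0$, which is consistent with $f\in\mathcal H_k$.

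The equivalence (2)$\Leftrightarrow$(3) is the statement that an element of $L^2(\pi_{k-1})$ vanishes exactly when its representative vanishes $\pi_{k-1}$-a.e. The only subtlety is well-definedness. By Fubini/Tonelli, $\int|f|\,\nu(dx_k)<\infty$ for $\pi_{k-1}$-a.e. $(x_1,\dots,x_{k-1})$, so the integrals in (3) are defined almost everywhere. Moreover, changing $f$ on a $\pi_k$-null set changes $\mathsf E_Lf$ only on a $\pi_{k-1}$-null set. The phrase "$\pi_k$-a.e. $(x_1,\dots,x_{k-1})$" should be read as $\pi_{k-1}$-a.e., since that is the marginal of $\pi_k$.

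I expect no genuine obstacle in this argument. The only step needing care is the measure-theoretic justification of Fubini and of the a.e. definition of the partial integrals, which is covered by $f\in L^2\subset L^1$ for a probability measure. As a remark one could add, this characterization shows that $\mathcal W_{k-1}^\perp$ is exactly $\mathcal S_1$ from Definition~\ref{D:S_l}, so by Theorem~\ref{T:AKM} it equals $\mathcal L_1$. This identifies $f^{\mathrm{inc}}$ as the projection onto the orthogonal complement of the functions built from windows of size $k-1$.
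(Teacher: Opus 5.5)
Your proof is correct and takes the same approach as the paper's sketch. Both reduce orthogonality to $\mathcal W_{k-1}$ to orthogonality against all functions of $(x_1,\dots,x_{k-1})$ and of $(x_2,\dots,x_k)$, which is equivalent to $\mathsf E_L f=0$ and $\mathsf E_R f=0$; your identification $J_L^*=\mathsf E_L$, $J_R^*=\mathsf E_R$ via Fubini supplies the justification the paper leaves implicit, and reading the a.e.\ qualifier in (3) as $\pi_{k-1}$-a.e.\ is the right interpretation.
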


\noindent
\emph{Sketch.}
If $f\perp \mathcal{W}_{k-1}$ then $f$ is orthogonal to every function of $(X_1,\dots,X_{k-1})$ and to every function of $(X_2,\dots,X_k)$, which is equivalent to $\mathsf E_L f=0$ and $\mathsf E_R f=0$.
Conversely, if both conditional expectations vanish then $f$ is orthogonal to each summand in $\mathcal{W}_{k-1}$ and hence to $\mathcal{W}_{k-1}$.
\qed

The preceding proposition gives an intrinsic characterization of the incremental subspace.
Since $\mathcal S_1=\mathcal L_1$, the new information introduced at window size $k$ is precisely
the orthogonal complement $\mathcal W_{k-1}^{\perp}$ of the smaller-window space $\mathcal W_{k-1}$.
Equivalently, $\mathcal L_1$ consists exactly of those functions whose left and right conditional
expectations vanish. Thus the hierarchy across window sizes is already implicit in the identity
\[
\mathcal L_1^{(k)}=\mathcal W_{k-1}^{\perp},
\]
which shows that enlarging the window adds exactly the component orthogonal to all statistics based on smaller windows.

\subsection*{Relation to entropy-based incremental measures}

The notion of incremental dependence considered here is conceptually related to entropy-based measures such as approximate entropy (ApEn) \cite{Pincus1991, PS96}, which compare pattern frequencies at successive embedding dimensions. 
Entropy-based methods quantify the increase in unpredictability when enlarging the window. 
In contrast, the present framework operates in the quadratic geometry of $L^2$ and isolates the orthogonal complement of functions representable at smaller window sizes. 
Both approaches reflect the same structural principle: increasing the window expands the model space, and the relevant quantity is the part not explained by smaller windows.

\subsection*{Comparison with Hoeffding-type decompositions}

The decomposition presented here bears a formal resemblance to the Hoeffding or ANOVA decomposition for symmetric functions of independent variables. 
In both cases, a function is expressed as an orthogonal sum of components associated with increasing interaction order.

However, the present setting differs in two essential respects. 
First, the variables entering $f$ are not independent; they arise from overlapping blocks of a single sequence, and the dependence structure is encoded through the covariance operator $B_k$. 
Second, the decomposition is determined by the spectral structure of this operator rather than by combinatorial symmetry alone. 
The resulting subspaces $\mathcal{ L}_\ell$ reflect levels of dependence induced by window overlap, rather than interaction order among independent arguments.

Thus while the hierarchical structure is reminiscent of classical ANOVA decompositions, the mechanism and interpretation are fundamentally tied to the overlapping Markov structure. 
For background on the classical Hoeffding decomposition and its ANOVA-type interpretation for symmetric functions of independent variables, see \cite{Serfling1980, vdVaart1998}.

\section{Applications: Designing Sliding-Window Tests}

We illustrate the framework on two classes of sliding-window statistics, respectively targeting correlation and monotone trend. In each
case, we begin with a natural window function and then extract its incremental
component using the theory developed above.

\subsection{Detecting Local Correlation}\label{SS:cov}

We first consider alternatives exhibiting short-range correlation, 
such as autoregressive-type dependence. A natural class of window 
functions in this setting consists of symmetric polynomial functions 
that capture interactions between observations at different positions 
within the window.

For fixed window size $k$, let $g(x_1,\dots,x_k)$ be the symmetric polynomial function defined in~(\ref{E:g l k}). Applying the projection technique of Section~3, we obtain its 
incremental component $g_1$.

To motivate the choice of this function, we consider $\tilde{f}(x_1,x_2)=x_1x_2$ which may be suggested to test for correlation between consecutive terms in the sequence $X_1,X_2,\cdots X_n$. The first step in the decomposition is to centralize $\tilde{f}$ to $f(x_1,x_2)=x_1x_2-\mu^2$ which is in $\mathcal{H}_2$. Since $k=2$, Theorem~\ref{T:projection} immediately gives the $\mathcal{L}_1$ component as
\begin{eqnarray*}
f_1(x_1,x_2) &=& f(x_1,x_2)-f_2^{(1)}(x_1)-f_2^{(2)}(x_2)\\
		&=& (x_1x_2-\mu^2)-(\mu x_1-\mu^2)-(\mu x_2-\mu^2)\\
		&=& (x_1-\mu)(x_2-\mu)\label{E:COV}
\end{eqnarray*}
Therefore, the statistic $\displaystyle\frac1{\sqrt{n-k+1}}\sum_{i=1}^{n-1}f_1(x_i,x_{i+1})$ estimates $\Cov(X_1,X_2)$, the covariance between consecutive terms. This suggests that the new information, resulting from incrementing the window size to $k$ and conveyed by the $\mathcal{L}_1$,  contained in the mixed terms $X_iX_{i+1}$ is sensitive to the presence of correlation between consecutive terms.

A generalization to higher window sizes of $k$ uses the $\mathcal{L}_1$ projection of the following elementary symmetric polynomial

\begin{equation}\label{E:g l k}
 g(x_1,\ldots, x_k)=\sum\prod_{j=1}^lx_{i_j}
\end{equation}
where the sum is taken over all monomials of fixed order $l\leq k$. That is, over all sets $\{i_1,\ldots,i_l\}$ of $\{1,\ldots,k\}$.

\begin{proposition}\label{P:g}
Let $2\le l\le k$ and let $g$ be the elementary symmetric polynomial of degree $l$ in
$x_1,\ldots,x_k$, given in (\ref{E:g l k}). Then the $\mathcal L_1$ component of $g$ has the form
\[
g_1(x_1,\ldots,x_k)
=
(x_1-\mu)(x_k-\mu)\,\widetilde g(x_2,\ldots,x_{k-1}),
\]
where $\widetilde g$ is the elementary symmetric polynomial of degree $l-2$ in the
intermediate variables $x_2,\ldots,x_{k-1}$.
\end{proposition}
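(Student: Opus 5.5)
Since $\mathcal L_1=\mathcal S_1=\mathcal W_{k-1}^{\perp}$, the $\mathcal L_1$ component of $g$ is its orthogonal projection onto the space of functions with vanishing left and right conditional expectations. By Theorem~\ref{T:projection} this projection is $s_1=g_1^{(1)}+g_{3}^{(2)}-g_2^{(1)}-g_2^{(2)}$. Here the marginals are taken after centering $g$ by its mean $\binom{k}{l}\mu^l$; when $k=2$ the last term is replaced as in the $g_{k-1}$ formula. This is the operator $(I-\mathsf E_L)(I-\mathsf E_R)$ applied to $g$, with $\mathsf E_L,\mathsf E_R$ viewed as integrating $x_k$, respectively $x_1$, and with the result still regarded as a function of all $k$ variables. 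So the plan is to compute
\[
g_1=(I-\mathsf E_L)(I-\mathsf E_R)g .
\]
The additive constant from centering is killed automatically by $(I-\mathsf E_L)(I-\mathsf E_R)$, so it may be ignored. I will first verify this identification directly: $\mathsf E_L$ and $\mathsf E_R$ commute on the product space (Fubini), each is idempotent, and $\mathsf E_L\mathsf E_R$ integrates both endpoints. Expanding the product therefore reproduces exactly $f-f_2^{(1)}-f_2^{(2)}+f_3^{(2)}$. The resulting function has vanishing left and right marginals, so it lies in $\mathcal S_1=\mathcal L_1$, and the remainder lies in the range of $\mathsf E_L$ plus the range of $\mathsf E_R$, i.e.\ in $\mathcal W_{k-1}$. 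This confirms the projection.

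Next I will decompose $g$ according to whether the endpoints $x_1,x_k$ appear in a monomial. Writing $e_m$ for the elementary symmetric polynomial of degree $m$ in the middle variables $x_2,\dots,x_{k-1}$, we have
\[
g=e_l+x_1e_{l-1}+x_ke_{l-1}+x_1x_k\,e_{l-2}.
\]
This uses the conventions $e_0=1$ and $e_m=0$ for $m<0$ or $m>k-2$. The operator $(I-\mathsf E_R)$ acts only on the $x_1$-dependence, and $(I-\mathsf E_L)$ only on the $x_k$-dependence, because the middle variables are untouched by both integrations. Hence for any $h(x_2,\dots,x_{k-1})$ we get
\[
(I-\mathsf E_L)(I-\mathsf E_R)\big[a(x_1)b(x_k)h\big]=(a(x_1)-\mathbb E a)(b(x_k)-\mathbb E b)\,h .
\]
Each term lacking $x_1$ or lacking $x_k$ is annihilated, since $(I-\mathsf E_R)c=0$ for any function $c$ constant in $x_1$. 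Only $x_1x_ke_{l-2}$ survives, giving $(x_1-\mu)(x_k-\mu)e_{l-2}$, which is the claimed form with $\widetilde g=e_{l-2}$.

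The main obstacle is bookkeeping rather than substance. One point is to make sure the formula of Theorem~\ref{T:projection} for $s_1$ matches $(I-\mathsf E_L)(I-\mathsf E_R)$ for general $k$, including the degenerate case $k=2$. The other is to handle the edge cases $l=2$, where $\widetilde g=1$, and $l=k$, where $\widetilde g=x_2\cdots x_{k-1}$, consistently with the conventions for $e_m$. As a check, the case $k=l=2$ recovers the computation $(x_1-\mu)(x_2-\mu)$ done just before the statement.
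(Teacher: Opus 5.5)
Your proof is correct and follows essentially the same route as the paper. The paper also splits $g=A_{k,l}(x_1+x_k)+B_{k,l}x_1x_k+C_{k,l}$ according to which endpoints occur, applies $g_1=g-g_2^{(1)}-g_2^{(2)}+g_3^{(2)}$ from Theorem~\ref{T:projection}, computes the three marginals explicitly and cancels. Your factorization of that formula as $(I-\mathsf E_L)(I-\mathsf E_R)$ makes the cancellation immediate, and it handles the additive constant and the case $k=2$ uniformly.
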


\begin{proof}
Since subtracting the constant ${k\choose l}\mu^l$ does not affect the $\mathcal L_1$
component, we work directly with $g$. Write
\[
g=A_{k,l}(x_1+x_k)+B_{k,l}x_1x_k+C_{k,l},
\]
where $A_{k,l}$, $B_{k,l}$, and $C_{k,l}$ are respectively the elementary symmetric
polynomials of degrees $l-1$, $l-2$, and $l$ in the intermediate variables
$x_2,\ldots,x_{k-1}$.

By Theorem~\ref{T:projection}, $g_1(x_1,\ldots,x_k)=g-g_2^{(1)}-g_2^{(2)}+g_3^{(2)}$, 
where direct computation shows that 
\[
g_2^{(1)}=A_{k,l}x_1+A_{k,l}\mu+B_{k,l}\mu x_1+C_{k,l},
\]
\[
g_2^{(2)}=A_{k,l}\mu+A_{k,l}x_k+B_{k,l}\mu x_k+C_{k,l},
\]
and
\[
g_3^{(2)}=2A_{k,l}\mu+B_{k,l}\mu^2+C_{k,l}.
\]
Substitution gives
\[
g_1
=
B_{k,l}(x_1x_k-\mu x_1-\mu x_k+\mu^2)
=
B_{k,l}(x_1-\mu)(x_k-\mu).
\]
Since $B_{k,l}$ is precisely the elementary symmetric polynomial of degree $l-2$ in
$x_2,\ldots,x_{k-1}$, the result follows.
\end{proof}

As a partial converse to the above proposition, one can easily establish that any function of the form $(x_1-\mu)(x_k-\mu)\tilde{g}$ is a pure $\mathcal{L}_1$ component, when $\tilde{g}$ is a function of the intermediate variables $x_2,\ldots,x_{k-1}$.

This observation provides a direct design principle. We can immediately construct functions that 
 lie entirely in \(\mathcal{L}_1\),
and hence represent pure incremental statistics. Unlike general window
functions, they contain no lower-order or degenerate components, and therefore
isolate the dependence structure introduced uniquely at span \(k\) without
requiring any projection.

Special cases of the above functions are $g(x_1,\ldots,x_k)=x_1x_2\cdots x_k$, which corresponds to $l=k$, with $g_1=x_2\cdots x_{k-1}(x_1-\mu)(x_k-\mu)$. For any $k$, note that the case of $l=2$ reduces to $g_1=(x_1-\mu)(x_k-\mu)$, so a true generalization uses $l\geq3$. The case $l=3$ yields $g_1=(x_1-\mu)(x_k-\mu)(x_2+\ldots+x_{k-1})$, while the case $l=4$ with $k\geq4$ gives 
\begin{equation}\label{E:l=4,k}
g_1(x_1,\ldots,x_k)=\left(x_1-\mu\right)(x_k-\mu)\times\left(\underset{2\leq i< j\leq k-1}{\sum\sum}x_ix_j\right).
\end{equation}

To implement a test statistic $S_f(n)$ given in Equation~(\ref{E:f_statistic}), one needs an explicit expression of the asymptotic variance. The next proposition computes this asymptotic variance for $g_1$ given in (\ref{E:l=4,k}) and will be used in the simulation section below.

\begin{proposition}\label{P:g_Asymp_var}
Let $g_1$ be the $\mathcal{L}_1$ projection of the function $g$ given in Equation~(\ref{E:g l k}) with $k\geq4$ and $l=4$,
where $X_1,\dots,X_k$ are i.i.d.\ with mean $\mu$ and variance $\sigma^2$.
Then the asymptotic variance of $g_1$
is given by
\[
\sigma^2(g_1)
=
\frac{\sigma^4 (k-2)(k-3)}{4}
\left[
(\mu^2+\sigma^2)^2
+
2(k-4)\mu^2(\mu^2+\sigma^2)
+
\frac{(k-4)(k-5)}{2}\mu^4
\right].
\]
\end{proposition}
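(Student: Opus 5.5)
The plan is to use the fact that $g_1$ is a pure $\mathcal L_1$ component. Proposition~\ref{P:g} gives $g_1=(x_1-\mu)(x_k-\mu)\,\widetilde g(x_2,\dots,x_{k-1})$, with $\widetilde g=e_2$ the elementary symmetric polynomial of degree $2$ in the $m:=k-2$ intermediate variables. Since $g_1\in\mathcal L_1$, the spectral variance formula (\ref{E:variance}) reduces to $\sigma^2(g_1)=\|g_1\|^2=\mathbb E[g_1(X_1,\dots,X_k)^2]$. Alternatively, one can see this directly from (\ref{E:Gen_Asymp_Var}): every lagged covariance $\Cov(g_1(Y_1),g_1(Y_{1+r}))$ with $1\le r\le k-1$ vanishes. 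The reason is that the variable $X_1$ appears in $g_1(Y_1)$ only through the factor $(X_1-\mu)$ and is absent from $g_1(Y_{1+r})$, so integrating it out kills the term.

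By independence of $X_1$, $X_k$ and the intermediate block, and using $\mathbb E(X-\mu)^2=\sigma^2$,
\[
\|g_1\|^2=\sigma^4\,\mathbb E\bigl[e_2(X_2,\dots,X_{k-1})^2\bigr].
\]
Everything therefore reduces to computing the second moment of $e_2$ in $m$ i.i.d.\ variables. Write $s:=\mathbb E X^2=\mu^2+\sigma^2$ and expand
\[
e_2^2=\sum_{\{i,j\}}\sum_{\{i',j'\}}X_iX_jX_{i'}X_{j'}.
\]
I will split the ordered pairs of pairs into three classes according to how many indices they share.
\begin{itemize}
\item Identical pairs: there are $\binom m2$ of them, each with expectation $s^2$.
\item Pairs sharing exactly one index: for each fixed pair there are $2(m-2)$ choices of the second pair, each term having expectation $s\mu^2$.
\item Disjoint pairs: for each fixed pair there are $\binom{m-2}{2}$ choices, each term having expectation $\mu^4$.
\end{itemize}
Summing gives
\[
\mathbb E[e_2^2]=\binom m2\Bigl[s^2+2(m-2)s\mu^2+\tfrac{(m-2)(m-3)}{2}\mu^4\Bigr].
\]
Substituting $m=k-2$ reproduces the bracket in the statement, with $m-2=k-4$ and $m-3=k-5$.

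The only delicate step is the combinatorial count and the overall leading constant. The bracket comes out exactly as claimed. The prefactor from $\binom{k-2}{2}$ is $(k-2)(k-3)/2$, which differs by a factor $2$ from the $(k-2)(k-3)/4$ printed in the statement. I would therefore re-check this carefully. One check is a direct computation at $k=4$: there $g_1=(x_1-\mu)(x_4-\mu)x_2x_3$, whose variance is exactly $\sigma^4 s^2$. Another is to see whether the paper uses an additional normalization convention (for instance an averaged $\widetilde g$, or a symmetrized counting) that accounts for the factor. Apart from this bookkeeping, the argument is a direct moment computation.
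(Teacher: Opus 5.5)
Your proof is correct and follows the paper's route: $\sigma^2(g_1)=\|g_1\|^2=\sigma^4\,\mathbb E[e_2^2]$ because $g_1\in\mathcal L_1$, and then the same three-way classification of the terms of $e_2^2$ into identical pairs, pairs sharing one index, and disjoint pairs. The factor-of-$2$ discrepancy is in the statement, not in your work: the paper's own count $\binom{k-2}{2}s^2+2(k-2)\binom{k-3}{2}s\mu^2+6\binom{k-2}{4}\mu^4$ (with $s=\mu^2+\sigma^2$) factors to exactly your $\frac{(k-2)(k-3)}{2}[\cdots]$, your $k=4$ check gives $\sigma^4(\mu^2+\sigma^2)^2$ rather than half of it, and the paper later uses $\sigma^2(g_1)=\binom{k-2}{2}$ under $N(0,1)$, so the $4$ in the displayed formula is a typo for $2$ and you need not look for a normalization convention.
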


\begin{proof}
 Let $\displaystyle\alpha=\mathbb E[X_1^2]=\mu^2+\sigma^2$,
and write $S:=\displaystyle\sum_{2\le i<j\le k-1} X_iX_j$,
so that Equation~(\ref{E:l=4,k}) can be written as
\[
g_1=(X_1-\mu)(X_k-\mu)\,S.
\]
By independence,
\[
\|g_1\|^2
=
\mathbb E[(X_1-\mu)^2]\,
\mathbb E[(X_k-\mu)^2]\,
\mathbb E[S^2]
=
\sigma^4\,\mathbb E[S^2].
\]
\noindent
We now compute $\mathbb E[S^2]$ by considering the expansion,
\[
S^2
=
\sum_{2\le i<j\le k-1}\;\sum_{2\le r<s\le k-1} X_iX_jX_rX_s.
\]
The terms are classified according to the overlap between the unordered pairs
$\{i,j\}$ and $\{r,s\}$.

\medskip
\noindent
\emph{(i) Identical pairs.}
There are $\displaystyle\binom{k-2}{2}$ 
such terms, each contributing $
\mathbb E[X_i^2X_j^2]=\alpha^2$.

\medskip
\noindent
\emph{(ii) Pairs sharing exactly one index.}
Choose the common index in $(k-2)$ ways, then choose two distinct remaining
indices from the other $(k-3)$ positions, giving
\[
(k-2)\binom{k-3}{2}
\]
unordered choices. Since either pair may play the role of $(i,j)$ or $(r,s)$
in the double sum, this contributes
\[
2(k-2)\binom{k-3}{2}
\]
terms in total. Each contributes $\displaystyle\mathbb E[X_i^2X_jX_s]=\alpha\mu^2$.

\medskip
\noindent
\emph{(iii) Disjoint pairs.}
Choose four distinct indices from the $(k-2)$ middle variables, then partition
them into two unordered pairs. This gives $\displaystyle3\binom{k-2}{4}$
unordered pairings, and again the two factors in the double sum may be interchanged,
so the total number of such terms is
\[
6\binom{k-2}{4}.
\]
Each contributes $\displaystyle\mathbb E[X_iX_jX_rX_s]=\mu^4$.

\medskip
Summing the three contributions,
\[
\mathbb E[S^2]
=
\binom{k-2}{2}\alpha^2
+
2(k-2)\binom{k-3}{2}\alpha\mu^2
+
6\binom{k-2}{4}\mu^4.
\]
Multiplying by $\sigma^4$ and replacing $\alpha$ gives the stated formula. Finally, since $g_1\in \mathcal{ L}_1$, its asymptotic variance is
$\sigma^2(g_1)=\|g_1\|^2$.
\end{proof}


Proposition~\ref{P:g_Asymp_var} provides the explicit asymptotic variance required 
for normalization of the projected statistic. The variance grows 
polynomially in $k$, reflecting the increasing number of endpoint 
interaction terms as the window expands. 

Thus enlarging the window increases sensitivity to longer-range 
correlation, but only through the incremental component isolated 
by the projection onto $\mathcal{L}_1$.

We record the following variance formula for later use, under the specific $N(0,1)$ assumption.

\begin{lemma}\label{L:g_variance_general}
Let $g$ be as in (\ref{E:g l k}), and suppose
that \(X_1,X_2,\ldots\) are i.i.d. \(N(0,1)\). Then
\[
\sigma^2\bigl(g\bigr)
=
\binom{k}{\ell}+2\binom{k}{\ell+1}.
\]
\end{lemma}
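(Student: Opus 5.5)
Under $N(0,1)$ we have $\mu=0$, so $g$ is already centered and lies in $\mathcal H_k$. Since $g$ is symmetric, Theorem~\ref{prop:perm-invariant} and Corollary~\ref{C:perm_invariant} give $\sigma^2(g)=\sum_{m=1}^k m\|s_m\|^2$, where $s_m$ is the auxiliary component. It is more direct, however, to use the long-run variance formula (\ref{E:Gen_Asymp_Var}):
\[
\sigma^2(g)=\operatorname{Var}(g(Y_1))+2\sum_{r=1}^{k-1}\Cov\bigl(g(Y_1),g(Y_{1+r})\bigr).
\]
I will compute each term combinatorially.

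\textbf{Step 1: the variance term.} Write $g(Y_1)=\sum_{I}X_I$, where $I$ ranges over $\ell$-subsets of $\{1,\dots,k\}$ and $X_I=\prod_{i\in I}X_i$. Because the $X_i$ are independent with mean $0$ and $\mathbb E[X_i^2]=1$, we have $\mathbb E[X_IX_J]=1$ if $I=J$ and $0$ otherwise. Indeed, if $I\neq J$, some index appears exactly once and its factor has mean zero. Hence $\operatorname{Var}(g(Y_1))=\binom{k}{\ell}$.

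\textbf{Step 2: the covariance terms.} The window $Y_{1+r}$ covers positions $\{1+r,\dots,k+r\}$. By the same orthogonality, $\Cov(g(Y_1),g(Y_{1+r}))$ equals the number of $\ell$-subsets contained in both windows, that is, in the overlap $\{1+r,\dots,k\}$ of size $k-r$. This count is $\binom{k-r}{\ell}$. Therefore
\[
\sigma^2(g)=\binom{k}{\ell}+2\sum_{r=1}^{k-1}\binom{k-r}{\ell}=\binom{k}{\ell}+2\sum_{m=1}^{k-1}\binom{m}{\ell}.
\]

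\textbf{Step 3: the hockey-stick identity.} The identity $\sum_{m=\ell}^{k-1}\binom{m}{\ell}=\binom{k}{\ell+1}$ gives the claimed formula.

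There is no real obstacle here. The only care needed is the orthogonality observation that distinct monomials are uncorrelated. This uses both $\mu=0$ and $\mathbb E X^2=1$; in particular, no higher moments such as $\mathbb E X^4$ enter, because only squares of distinct variables occur.

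As a cross-check, I could also confirm the result with the spectral route. Theorem~\ref{T:projection} should give $\|s_m\|^2=\binom{k-m-1}{\ell-2}$-type counts of monomials whose extreme indices are separated by a fixed span. Summing $m\|s_m\|^2$ over $m$ should reproduce the same total, but the direct computation above suffices.
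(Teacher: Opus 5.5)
Your proof is correct and is essentially the paper's argument: the long-run variance formula, orthogonality of distinct square-free monomials giving $\operatorname{Var}(g(Y_1))=\binom{k}{\ell}$ and $\Cov(g(Y_1),g(Y_{1+r}))=\binom{k-r}{\ell}$, and then the hockey-stick identity. One slip, confined to your optional spectral cross-check: $s_m$ is a sum of $m$ translates, so $\|s_m\|^2=m\binom{k-m-1}{\ell-2}$, and it is this value that makes $\sum_m m\|s_m\|^2$ reproduce the total.
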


\begin{proof}
Let
$\displaystyle Z_i=g(Y_i)=g(X_i,\ldots,X_{i+k-1})$.
Using Formula (\ref{E:Gen_Asymp_Var}), and the fact that distinct square-free monomials in independent
centered variables are orthogonal, we have
\[
\operatorname{Var}(Z_1)=\binom{k}{\ell}.
\]
For lag \(r\), the two windows overlap in \(k-r\) variables. A pair of monomials
contributes nonzero expectation only when the same \(\ell\) actual variables are
selected from this overlap. Hence
\[
\operatorname{Cov}(Z_1,Z_{1+r})=\binom{k-r}{\ell}.
\]
The result follows by the hockey-stick identity.
\end{proof}

\subsection{Tests for Trend}\label{Sub:Trend}
In order to detect trend in a time series, we assume that the measurements are independent and identically distributed from an absolutely continuous distribution $F(x)$ corresponding to the probability measure $\nu$, which justifies the assumption of no ties in the data. Indeed, a variety of window functions could in principle be used for trend detection. For example, permutation-invariant summaries such as the median, maximum, or minimum may be sensitive to changes in local level or extremal behavior. Such functions, however, do not reflect the temporal ordering of the observations within the window. For this reason, we focus here on the following statistic $K$, whose signed lag comparisons make it directly sensitive to monotone trend,

\begin{equation}\label{E:Ktrend}
K(x_1,\ldots,x_k)=\sum_{\Delta=1}^{k-1}\sum_{j=1}^{k-\Delta}\alpha_{\Delta}\operatorname{sgn}(x_{\Delta+j}-x_j).
\end{equation}

The statistic aggregates signed pairwise comparisons across different lag separations
within the window. The coefficients $\alpha_\Delta$ allow the procedure to emphasize
particular lag distances; for instance, increasing weights assign greater importance to
persistent increases over longer lags.

When $k=n$ and $\alpha_\Delta\equiv 1$, the resulting statistic is essentially the
classical Mann--Kendall statistic. In this sense, $S_K(n)$ may be viewed as a
localized rank-based trend statistic, where comparisons are restricted to observations
that fall within the same window.

In keeping with the general perspective of this paper, we first identify the
incremental component $K_1$, which isolates the information introduced uniquely at
window size $k$. We then return to the full statistic $K$ and compute its asymptotic
variance directly.

The following lemma is used as a repository of the types of integrals needed to obtain the decomposition of $K$. The proof is straightforward and therefore omitted.

\begin{lemma}\label{L:sgn}
For $i<j$ we have

(a) $\int_{\mathcal{X}}\operatorname{sgn}(x_j-x_i)dF(x_i)=2F(x_j)-1$

(b) $\int_{\mathcal{X}}\operatorname{sgn}(x_j-x_i)dF(x_j)=1-2F(x_i)$

(c) $\int_{\mathcal{X}}\operatorname{sgn}(x_j-x_i)F(x_i)dF(x_i)=F^2(x_j)-\frac12$

(d) $\int_{\mathcal{X}}\operatorname{sgn}(x_j-x_i)F(x_j)dF(x_j)=\frac12-F^2(x_i)$

(e) $\int_{\mathcal{X}^2}\operatorname{sgn}(x_j-x_i)dF(x_i)dF(x_j)=0$
\end{lemma}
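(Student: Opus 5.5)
The plan is to reduce each of the five identities to elementary computations with the continuous distribution function $F$. The only structural input is that, since $F$ is absolutely continuous, ties occur with probability zero. So for fixed $x_j$ we have $\operatorname{sgn}(x_j-x_i)=\mathbf 1\{x_i<x_j\}-\mathbf 1\{x_i>x_j\}$ for $\nu$-a.e.\ $x_i$. In addition, $F(X)$ is uniform on $(0,1)$ when $X\sim F$.

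For (a), I integrate this indicator decomposition in $x_i$. This gives $F(x_j)-(1-F(x_j))=2F(x_j)-1$, because $\nu(\{x_j\})=0$. Part (b) is the same computation with the roles reversed, integrating over $x_j$ with $x_i$ fixed: $\nu(x_j>x_i)-\nu(x_j<x_i)=1-2F(x_i)$.

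For (c), I split the integral as
\[
\int_{\{x_i<x_j\}}F(x_i)\,dF(x_i)-\int_{\{x_i>x_j\}}F(x_i)\,dF(x_i).
\]
Using the substitution $u=F(x_i)$, valid since $F$ is continuous and $F(X_i)$ is uniform, the first term equals $\int_0^{F(x_j)}u\,du=F^2(x_j)/2$. The second term equals $\int_{F(x_j)}^1u\,du=(1-F^2(x_j))/2$. Their difference is $F^2(x_j)-\tfrac12$. Part (d) is the mirror image: $\int_{F(x_i)}^1u\,du-\int_0^{F(x_i)}u\,du=\tfrac12-F^2(x_i)$.

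Finally, (e) follows from (a) by Fubini. It equals $\int(2F(x_j)-1)\,dF(x_j)=2\cdot\tfrac12-1=0$, using $\mathbb E[F(X)]=\tfrac12$. Alternatively, it follows from antisymmetry of $\operatorname{sgn}$ under exchanging the i.i.d.\ coordinates.

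The only point requiring care is justifying the change of variables $u=F(x)$. Here I will invoke the probability integral transform: for continuous $F$, $F(X)\sim U(0,1)$. Then $\int h(F(x))\mathbf 1\{F(x)<t\}\,dF(x)=\int_0^t h(u)\,du$, and the event $\{x<x_j\}$ agrees with $\{F(x)<F(x_j)\}$ up to a $\nu$-null set. That null-set claim holds because the set where $F$ is flat carries no mass. I do not expect any other obstacle.
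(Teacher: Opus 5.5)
Your proposal is correct. The paper omits this proof as ``straightforward,'' and your direct computation is exactly that verification: the a.e.\ indicator decomposition of $\operatorname{sgn}$ (ties are $\nu$-null), the probability integral transform $F(X)\sim U(0,1)$ with your valid null-set justification for replacing $\{x<x_j\}$ by $\{F(x)<F(x_j)\}$, and Fubini for (e).
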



Using Part (e) of this lemma, we can clearly see that $K$ is centered about zero. Using Parts (a) and (b), we obtain the $\mathcal{L}_1$ component function $K_1=K-K_2^{(1)}-K_2^{(2)}+K_3^{(2)}$ as

\[
K_1(x_1,\ldots,x_k)=\alpha_{k-1}\left(\operatorname{sgn}(x_k-x_1)-2(F(x_k)-F(x_1))\right),
\]

\noindent
where we apply the indexing convention of Subsection~\ref{Ss:auxiliary_Decomp}. This component function shows that the scoring coefficients are not relevant in the context of the $\mathcal{L}_1$ component because only $\alpha_{k-1}$ appears in $K_1$ and it can therefore be equated with 1. As a test statistic, $K_1$ is distribution-free because $F(x_i)$ has a standard uniform $U[0,1]$. However, calculation of $K_1$ requires the explicit form of the {\it parent} distribution and therefore the corresponding test $S_{K_1}$ is no longer nonparametric.   It can still serve, however,  as a test of uniformity.

 A nonparametric test can be obtained using $K$ provided we calculate $\sigma_K^2(n)$. To obtain the asymptotic variance of the unprojected statistic associated with $K$, we use a direct covariance calculation based on its representation as a weighted sum of lagwise sign comparisons.


\begin{proposition}
Consider $\displaystyle S_K(n)=\frac{1}{\sqrt{n-k+1}}
\sum_{i=1}^{n-k+1}K(X_i,\ldots,X_{i+k-1})$, 

where $\{X_t\}$ are i.i.d.\ from a continuous distribution and $K$ is given in Equation~(\ref{E:Ktrend}). Then $S_K(n)$ is asymptotically normal with mean zero and 
asymptotic variance
\[
\sigma_K^2
=
\frac13\sum_{\Delta=1}^{k-1}(k-\Delta)^2\alpha_\Delta^2.
\]
\end{proposition}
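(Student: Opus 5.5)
The plan is to bypass the spectral decomposition and compute the long-run variance directly, by regrouping the sliding sum according to pairs of time indices. Write $\xi^{\Delta}_t=\operatorname{sgn}(X_{t+\Delta}-X_t)$. In the double sum $\sum_{i}K(X_i,\ldots,X_{i+k-1})$, a fixed pair $(t,t+\Delta)$ is compared inside window $i$ exactly when $i\le t$ and $t+\Delta\le i+k-1$. For all $t$ away from the two ends of the sample this happens for exactly $k-\Delta$ windows. Hence
\[
\sum_{i=1}^{n-k+1}K(Y_i)=\sum_{\Delta=1}^{k-1}(k-\Delta)\alpha_\Delta\sum_{t=1}^{n-\Delta}\xi^{\Delta}_t \;+\;R_n ,
\]
where $R_n$ involves only $O(k^2)$ bounded terms near the boundaries. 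Therefore $R_n/\sqrt{n-k+1}\to0$ almost surely, and it suffices to treat the stationary sum $T_n=\sum_t\sum_\Delta c_\Delta\,\xi^\Delta_t$ with $c_\Delta=(k-\Delta)\alpha_\Delta$.

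The summands $\zeta_t=\sum_\Delta c_\Delta\xi^\Delta_t$ form a bounded, stationary, $(k-1)$-dependent sequence with mean zero, by Lemma~\ref{L:sgn}(e). Asymptotic normality therefore follows from the CLT for $m$-dependent sequences, or equivalently from the Markov chain CLT cited in Section~2. Its variance is the long-run variance $\sum_{s\in\mathbb Z}\Cov(\zeta_0,\zeta_s)$, which equals
\[
\sum_{\Delta,\Delta'}c_\Delta c_{\Delta'}\sum_{s\in\mathbb Z}\Cov(\xi^\Delta_0,\xi^{\Delta'}_s).
\]
The key step is evaluating $\sum_s\Cov(\xi^\Delta_0,\xi^{\Delta'}_s)$. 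For continuous i.i.d.\ data, $\operatorname{sgn}(X_b-X_a)$ and $\operatorname{sgn}(X_d-X_c)$ are uncorrelated unless the index pairs share an index. The cases are as follows, using Lemma~\ref{L:sgn}(a),(b), since $\mathbb E[(2U-1)^2]=1/3$:
\begin{itemize}
\item If the pairs coincide, the covariance is $1$.
\item If they share exactly one index in the same role (both lower endpoints or both upper endpoints), the covariance is $+1/3$.
\item If the shared index is the upper endpoint of one pair and the lower endpoint of the other, the covariance is $-1/3$, because $\mathbb E[(1-2U)(2U-1)]=-1/3$.
\end{itemize}

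For $\Delta\ne\Delta'$ the pairs $\{0,\Delta\}$ and $\{s,s+\Delta'\}$ cannot coincide. Sharing occurs at exactly four values of $s$, namely $s=0$, $s+\Delta'=\Delta$, $s=\Delta$ and $s+\Delta'=0$. These contribute $+\tfrac13+\tfrac13-\tfrac13-\tfrac13=0$. For $\Delta=\Delta'$ the pairs coincide at $s=0$, contributing $1$, and share one index at $s=\pm\Delta$, each in opposite roles, contributing $-\tfrac13$ each. The total is $\tfrac13$. Hence the cross-lag terms vanish, and the long-run variance is $\tfrac13\sum_\Delta c_\Delta^2=\tfrac13\sum_{\Delta}(k-\Delta)^2\alpha_\Delta^2$, as claimed.

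The main obstacle is the bookkeeping of the sign covariances, in particular getting the role-dependent sign $\pm1/3$ right and confirming the exact cancellation for $\Delta\ne\Delta'$. I would cross-check the result against formula~(\ref{E:Gen_Asymp_Var}) for $k=2$, where the answer should be $\alpha_1^2/3$. A secondary but routine point is justifying that the boundary remainder $R_n$ and the replacement of the normalization $n-k+1$ by $n-\Delta$ do not affect the limit.
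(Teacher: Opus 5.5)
Your proposal is correct and follows essentially the same route as the paper. Like the paper, you regroup the sliding sum into lagwise sign sums weighted by $(k-\Delta)\alpha_\Delta$ with a bounded boundary remainder. You then show that the same-lag long-run covariance is $1-\tfrac23=\tfrac13$ and that the four cross-lag shared-endpoint configurations cancel as $\tfrac13+\tfrac13-\tfrac13-\tfrac13=0$. Your explicit appeal to the CLT for bounded stationary $(k-1)$-dependent sequences is a useful addition, since the paper's proof only computes the variance and leaves asymptotic normality implicit.
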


\begin{proof}
For $1\le \Delta\le k-1$, let
\[
U_{t,\Delta}=\operatorname{sgn}(X_{t+\Delta}-X_t),
\qquad
T_{\Delta,n}=\sum_{t=1}^{n-\Delta}U_{t,\Delta}.
\]
Then each lag-$\Delta$ comparison appears exactly $k-\Delta$ times in the interior of the
overlapping sum, so
\[
\sum_{i=1}^{n-k+1}K(X_i,\ldots,X_{i+k-1})
=
\sum_{\Delta=1}^{k-1}(k-\Delta)\alpha_\Delta T_{\Delta,n}+O(1).
\]
Hence it suffices to compute the asymptotic covariance structure of the processes
$T_{\Delta,n}$.

Since the distribution is continuous, $\mathbb E[U_{t,\Delta}]=0$ and
$\operatorname{Var}(U_{t,\Delta})=1$. Two variables
$U_{t,\Delta}$ and $U_{s,\Delta'}$ are independent unless the pairs
$(t,t+\Delta)$ and $(s,s+\Delta')$ share an index. When $\Delta=\Delta'$,
the only nonzero off-diagonal covariances arise from $s=t\pm \Delta$, in which case
the shared endpoint appears with opposite sign and
\[
\operatorname{Cov}(U_{t,\Delta},U_{t+\Delta,\Delta})=
\operatorname{Cov}(U_{t,\Delta},U_{t-\Delta,\Delta})=-\frac13.
\]
Therefore
\[
\lim_{n\to\infty}\frac{1}{n}\operatorname{Var}(T_{\Delta,n})
=
1+2\left(-\frac13\right)
=
\frac13.
\]

If $\Delta\neq\Delta'$, the possible shared-endpoint configurations produce two
covariances equal to $+1/3$ and two equal to $-1/3$, so their total contribution
cancels. Hence
\[
\lim_{n\to\infty}\frac{1}{n}\operatorname{Cov}(T_{\Delta,n},T_{\Delta',n})=0,
\qquad \Delta\neq\Delta'.
\]

It follows that
\[
\lim_{n\to\infty}\operatorname{Var}(S_K(n))
=
\frac13\sum_{\Delta=1}^{k-1}(k-\Delta)^2\alpha_\Delta^2,
\]
which is the stated asymptotic variance.
\end{proof}

We note that we did not use the spectral decomposition here, since $K$ is not symmetric. In principle, the spectral components of $K$ could be extracted from its
auxiliary decomposition, but this would be rather lengthy. Fortunately, its asymptotic
variance remains tractable through a direct covariance calculation.

\subsection{A scale-selection index}

The preceding discussion suggests that, as the window size increases, the amount of genuinely new information introduced at the full span may decrease relative to the total variability of the statistic. This motivates a quantitative measure of the incremental contribution at scale \(k\). For a window function \(f\in \mathcal H_k\), we define the relative incremental index
\[
I_k(f)
=
\frac{\|f^{\mathrm{inc}}\|^2}{\sigma^2(f)}.
\]
This quantity measures the proportion of the asymptotic variance that is attributable to the incremental component.
It can be used in two complementary ways. For a nested family of window functions \(f^{(k)}\), the values \(I_k(f^{(k)})\) may be compared across \(k\) to identify scales at which the full-span incremental contribution is relatively large. At a fixed window size, the index also allows comparison between competing statistics: a small value of \(I_k(f)\) indicates that only a small fraction of the asymptotic variance is attributable to the full-span incremental component, while a large value indicates that the statistic is strongly concentrated on that component.

For the two nested families considered  in this article, we show below that \(I_k(f)\) is decreasing in \(k\). This monotonicity gives the index an operational scale-selection interpretation: once \(I_k(f)\) falls below a prescribed threshold, all larger window sizes have no larger relative full-span contribution. Thus the stopping rule is stable, in the sense that the decision to stop is not reversed at higher scales. This is especially useful when the threshold is chosen as a practical relevance threshold for incremental information.


\begin{proposition}
Consider the class of functions $g$, given by (\ref{E:g l k})
where \(l\ge 2\) is fixed, and suppose the underlying variables are i.i.d. \(N(0,1)\).
Then the relative incremental index is given by
\[
I_k(g)=
\frac{l(l-1)(l+1)}
{k(k-1)(2k-l+1)}.
\]
In particular, \(I_k(g)\) is strictly decreasing in \(k\).
\end{proposition}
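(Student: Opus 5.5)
The index is a ratio of two quantities that earlier results already supply under the $N(0,1)$ assumption, so the plan is to compute each one and simplify.

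\emph{Numerator.} By Proposition~\ref{P:g}, with $\mu=0$, the incremental component is $g^{\mathrm{inc}}=g_1=x_1x_k\,\widetilde g(x_2,\ldots,x_{k-1})$. Here $\widetilde g$ is the elementary symmetric polynomial of degree $l-2$ in the $k-2$ intermediate variables. Independence gives
\[
\|g_1\|^2=\mathbb E[X_1^2]\,\mathbb E[X_k^2]\,\mathbb E[\widetilde g^2]=\mathbb E[\widetilde g^2].
\]
Distinct square-free monomials in independent centered unit-variance variables are orthonormal, which is the fact already used in Lemma~\ref{L:g_variance_general}. Hence
\[
\|g_1\|^2=\binom{k-2}{l-2}.
\]
This is the only place the $N(0,1)$ assumption is used beyond what the lemma uses; in fact only $\mu=0$ and $\sigma^2=1$ are needed.

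\emph{Denominator.} Lemma~\ref{L:g_variance_general} gives $\sigma^2(g)=\binom{k}{l}+2\binom{k}{l+1}$. The constant $\binom{k}{l}\mu^l$ vanishes here, so $g$ is already centered. To put this in a usable form, write
\[
\binom{k}{l+1}=\binom{k}{l}\frac{k-l}{l+1},
\qquad\text{so}\qquad
\sigma^2(g)=\binom{k}{l}\frac{2k-l+1}{l+1}.
\]

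\emph{Ratio.} Use
\[
\binom{k-2}{l-2}\Big/\binom{k}{l}=\frac{l(l-1)}{k(k-1)}.
\]
Then
\[
I_k(g)=\frac{l(l-1)}{k(k-1)}\cdot\frac{l+1}{2k-l+1},
\]
which is the claimed formula.

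\emph{Monotonicity.} For fixed $l\ge 2$, the numerator $l(l-1)(l+1)$ is a positive constant. The denominator $k(k-1)(2k-l+1)$ is a product of three factors, each positive and strictly increasing in $k$ for $k\ge l\ge 2$; note $2k-l+1\ge k+1>0$. So the denominator is strictly increasing, and $I_k(g)$ is strictly decreasing in $k$.

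\emph{Main obstacle.} The only genuinely delicate point is trusting the covariance count in Lemma~\ref{L:g_variance_general}: that lag $r$ contributes $\binom{k-r}{l}$ and that the hockey-stick sum gives $\binom{k}{l}+2\binom{k}{l+1}$. Rather than rely on it blindly, I would cross-check it against the spectral formula~(\ref{E:variance}) via Corollary~\ref{C:perm_invariant}, which applies because $g$ is symmetric. Since $\mu=0$, each auxiliary component $s_\ell$ of $g$ is the sum of monomials whose support spans exactly $k-\ell+1$ consecutive positions. Weighting each $\ell\|s_\ell\|^2$ should reproduce the same total. For example, at $k=l$ this gives $\sigma^2(g)=1$, consistent with $g_1=g$ and $I_k=1$, which matches the formula at $k=l$.
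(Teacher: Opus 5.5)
Your proposal is correct and follows the paper's proof. Both arguments take $\|g_1\|^2=\binom{k-2}{l-2}$ from Proposition~\ref{P:g} via orthonormality of square-free monomials, divide by $\binom{k}{l}+2\binom{k}{l+1}$ from Lemma~\ref{L:g_variance_general}, and you write out the simplification and the monotonicity check that the paper leaves as ``simplifying'' and ``clearly.'' Your optional cross-check via Corollary~\ref{C:perm_invariant} also works, since $\|s_\ell\|^2=\ell\binom{k-\ell-1}{l-2}$ and $\sum_{\ell}\ell^2\binom{k-\ell-1}{l-2}=\binom{k}{l}+2\binom{k}{l+1}$ by a Vandermonde-type identity.
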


\begin{proof}
By Proposition~\ref{P:g}, under the standard normal null,
\[
g^{\mathrm{inc}}(x_1,\ldots,x_k)
=
x_1x_k\, e_{l-2}(x_2,\ldots,x_{k-1}),
\]
where \(e_{l-2}\) is short for the elementary symmetric polynomial in the middle variables and of degree \(l-2\).
Since distinct square-free monomials in independent standard normal variables are
orthogonal in \(L^2\), we have $\displaystyle\|g^{\mathrm{inc}}\|^2=\binom{k-2}{l-2}$.

Dividing by the expression in Lemma~\ref{L:g_variance_general} and simplifying, 
we obtain the claimed expression, 
which is clearly strictly decreasing in \(k\).
\end{proof}


A similar result also follows for the trend statistic $K$.

\begin{proposition}
Consider the trend statistic \(K\) with power weights $\alpha_\Delta=\Delta^p$, for $ p\ge 0$.
Then the relative incremental index is decreasing in $k$ and is given by
\[
I_k(K)
=
\frac{(k-1)^{2p}}
{\sum_{\Delta=1}^{k-1}(k-\Delta)^2\Delta^{2p}},
\]
 Moreover,
\[
I_k(K) \asymp k^{-3}.
\]
\end{proposition}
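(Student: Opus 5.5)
The plan is to compute the numerator and denominator of $I_k(K)=\|K^{\mathrm{inc}}\|^2/\sigma_K^2$ separately from results already in the paper, then prove monotonicity by a termwise comparison and read off the $k^{-3}$ rate from a Riemann-sum approximation.

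\emph{Numerator.} We use the explicit $\mathcal L_1$ component obtained from Lemma~\ref{L:sgn},
\[
K_1=\alpha_{k-1}\bigl(\operatorname{sgn}(x_k-x_1)-2(F(x_k)-F(x_1))\bigr).
\]
Under the null, $U=F(X_1)$ and $V=F(X_k)$ are independent $U[0,1]$, and $\operatorname{sgn}(X_k-X_1)=\operatorname{sgn}(V-U)$ almost surely. Expanding the square and using
\[
\mathbb E[\operatorname{sgn}^2]=1,\qquad \mathbb E[\operatorname{sgn}(V-U)(V-U)]=\mathbb E|V-U|=\tfrac13,\qquad \mathbb E[(V-U)^2]=\tfrac16,
\]
we get
\[
\|K_1\|^2=\alpha_{k-1}^2\Bigl(1-\tfrac43+\tfrac46\Bigr)=\tfrac13(k-1)^{2p}.
\]

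\emph{Denominator and the formula.} The preceding proposition gives
\[
\sigma_K^2=\tfrac13 D_k,\qquad D_k:=\sum_{\Delta=1}^{k-1}(k-\Delta)^2\Delta^{2p}.
\]
The factors $\tfrac13$ cancel, which yields the displayed formula $I_k(K)=(k-1)^{2p}/D_k$.

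\emph{Monotonicity.} It suffices to show $D_{k+1}/k^{2p}>D_k/(k-1)^{2p}$. Reindex $D_{k+1}$ by $\Delta=j+1$ with $j=0,\dots,k-1$:
\[
\frac{D_{k+1}}{k^{2p}}=\sum_{j=0}^{k-1}(k-j)^2\Bigl(\frac{j+1}{k}\Bigr)^{2p},
\qquad
\frac{D_k}{(k-1)^{2p}}=\sum_{j=1}^{k-1}(k-j)^2\Bigl(\frac{j}{k-1}\Bigr)^{2p}.
\]
For $1\le j\le k-1$ we have $(j+1)(k-1)\ge jk$, since this is equivalent to $k-1\ge j$. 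Hence $(j+1)/k\ge j/(k-1)$, so each term of the first sum dominates the matching term of the second. The extra $j=0$ term, $k^2k^{-2p}>0$, makes the inequality strict. Therefore $I_{k+1}(K)<I_k(K)$.

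\emph{Rate.} Write $D_k=k^{2p+3}\cdot\frac1k\sum_{\Delta}(1-\Delta/k)^2(\Delta/k)^{2p}$. The Riemann sum converges to
\[
\int_0^1(1-t)^2t^{2p}\,dt=B(2p+1,3)>0 .
\]
Hence $D_k\asymp k^{2p+3}$, while $(k-1)^{2p}\asymp k^{2p}$, giving $I_k(K)\asymp k^{-3}$. One can also avoid limits: crude bounds, such as restricting the sum to $\Delta\in[k/4,k/2]$ for the lower bound and bounding each term by $k^{2+2p}$ for the upper bound, give $c\,k^{2p+3}\le D_k\le k^{2p+3}$ for $k\ge 4$.

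The main obstacle is the monotonicity step, because $D_k$ has no closed form for general real $p$. The shift-by-one reindexing is the device that makes the termwise comparison work. The numerator computation is routine but needs care with the cross term $\mathbb E|V-U|=1/3$.
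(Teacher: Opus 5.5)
Your proof is correct and is essentially the paper's argument. The paper likewise takes $\|K_1\|^2=\alpha_{k-1}^2/3$ and $\sigma_K^2$ from the preceding proposition, and its monotonicity step substitutes $r=k-\Delta$, $b=k-1$ and notes that each factor $\bigl((b+1-r)/b\bigr)^{2p}$ is nondecreasing in $b$ while a new positive term appears; this is exactly your shift-by-one termwise comparison written in the other index, followed by the same Riemann-sum rate argument (your explicit $\|K_1\|^2$ computation via $U=F(X_1)$, $V=F(X_k)$ and your crude two-sided bounds just fill in details the paper leaves implicit).
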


\begin{proof}
By the expression for \(K_1\) obtained above and Lemma~\ref{L:sgn}, we compute $\|K_1\|^2=\displaystyle\frac{\alpha_{k-1}^2}{3}$.
We obtain the formula
\[
I_k(K)
=
\frac{\alpha_{k-1}^2}
{\sum_{\Delta=1}^{k-1}(k-\Delta)^2\alpha_\Delta^2}
\]
Put \(a=2p\) and \(b=k-1\), substitute \(\alpha_\Delta=\Delta^p\) and reparametrize to get
\[
I_k(K)^{-1}
=
\sum_{r=1}^{b} r^2
\left(\frac{b+1-r}{b}\right)^a.
\]
For each fixed \(r\), the factor
\[
\left(\frac{b+1-r}{b}\right)^a
\]
is nondecreasing in \(b\), and passing from \(b\) to \(b+1\) also adds a new nonnegative term. Hence \(I_k(K)^{-1}\) is increasing in \(k\), and therefore \(I_k(K)\) is decreasing.
Finally, a Riemann-sum argument gives
\[
\sum_{\Delta=1}^{k-1}(k-\Delta)^2\Delta^{2p}
\asymp k^{2p+3},
\]
while the numerator is of order \(k^{2p}\). Thus
\[
I_k(K)\asymp k^{-3}.
\]
\end{proof}


The two preceding calculations reveal a common phenomenon. Although the correlation statistic \(g\) and the trend statistic \(K\) are structurally different, their relative incremental contributions both decrease at the same asymptotic rate, of order \(k^{-3}\), in natural nested regimes: fixed degree \(l\) in the polynomial case, and bounded or moderately varying weights \(\alpha_\Delta\) in the trend case. This reflects the same mechanism in both examples: the full statistic aggregates many lower-span contributions, while the incremental component isolates only the full-span endpoint contribution. Thus, increasing the window size eventually adds less genuinely new information relative to the accumulated lower-span structure.



\subsection{Local asymptotics}\label{sS:local_asymp}
To complement the design principles above, we briefly examine local asymptotic behavior
for the proposed window statistics. The calculations follow standard LAN/Pitman-type arguments under contiguous alternatives (see, for example, Chapter 7 of~\cite{vdVaart1998}). These calculations identify the relevant detection
scales under contiguous alternatives and help motivate the local-power simulations
reported in the next section. We first consider the polynomial correlation statistics,
and then turn to the localized trend statistic.

\subsection*{Local asymptotics for $g$ under Gaussian AR(1): first- vs second-order sensitivity}

We begin with local alternatives of the form
\[
\theta_n=\theta_0+c\,n^{-\gamma}, \qquad c\neq 0,\ \gamma>0,
\]
and ask which values of $\gamma$ yield a nondegenerate asymptotic regime. The
classical Pitman case corresponds to $\gamma=1/2$; see, for example, [11, 21]. More
generally, the relevant rate depends on the order of the first nonvanishing term in the
mean expansion under the alternative. To illustrate this, consider a stationary Gaussian
AR(1) model
\begin{equation}\label{E:AR_model}
X_t=\rho X_{t-1}+\varepsilon_t,\qquad \varepsilon_t\sim N(0,1-\rho^2),
\end{equation}
where the innovations \(\varepsilon_t\) are independent, independent of the past, and
\(\varepsilon_t\sim N(0,1-\rho^2)\); the process is initialized in its stationary
distribution, so that \(X_t\sim N(0,1)\).
Thus, $E[X_t]=0$ and $\operatorname{Cov}(X_s,X_t)=\rho^{|s-t|}$. Since the process is stationary and geometrically mixing for $|\rho|<1$, standard central limit theorems for functionals of such sequences apply. For a fixed window
function $f\in \mathcal{H}_k$, write
\[
m_f(\rho):=E_\rho[f(X_1,\dots,X_k)].
\]
The local behavior of the normalized overlapping sum is then determined by the
expansion of $m_f(\rho)$ at $\rho=0$.



\begin{proposition}[Second-order local behavior of the polynomial correlation statistics]
Let $g$ be the symmetric window function given in (\ref{E:g l k}) 
and let $g_1$ be its incremental component as in Proposition~4.1.
Under the Gaussian AR(1) model, $m_g'(0)=m_{g_1}'(0)=0$, and
\[
m_g(\rho)=\binom{k-2}{2}\rho^2+O(\rho^3),
\qquad
m_{g_1}(\rho)=\rho^2+O(\rho^3).
\]
Consequently, under the second-order local scaling $\rho_n=c\,n^{-1/4}$,
\[
\frac{(n-k+1)^{-1/2}\sum_{i=1}^{n-k+1} g(Y_i)}{\sigma(g)}
\Rightarrow
N\!\left(\frac{\binom{k-2}{2}c^2}{\sigma(g)},\,1\right),
\]
and 
\[
\frac{(n-k+1)^{-1/2}\sum_{i=1}^{n-k+1} g_1(Y_i)}{\sigma(g_1)}
\Rightarrow
N\!\left(\frac{c^2}{\sigma(g_1)},\,1\right).
\]
\end{proposition}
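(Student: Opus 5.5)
The statement's two expansions only fit the case $l=4$, so I will prove it for $l=4$. With $l=2$ one gets $m_g(\rho)=\sum_{i<j}\rho^{j-i}$, whose linear term $(k-1)\rho$ does not vanish. With $l=4$ the statistic $g_1$ is the function in (\ref{E:l=4,k}), and I take $k\ge 4$. The argument has two parts: a moment expansion of $m_g$ and $m_{g_1}$ at $\rho=0$, and a triangular-array CLT along $\rho_n=c\,n^{-1/4}$.

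\emph{Expansion.} Under the AR(1) model (\ref{E:AR_model}) the vector $(X_1,\dots,X_k)$ is centered Gaussian with covariances $\rho^{|i-j|}$, and $\mu=0$.
\begin{itemize}
\item For $a<b<c<d$, Isserlis' formula gives
\[
E[X_aX_bX_cX_d]=\rho^{(b-a)+(d-c)}+\rho^{(c-a)+(d-b)}+\rho^{(d-a)+(c-b)} .
\]
\item The last two exponents are at least $4$. The first equals $2$ exactly when $b=a+1$ and $d=c+1$, so there are no $\rho^0$ or $\rho^1$ terms and $m_g'(0)=0$.
\item The $\rho^2$ coefficient counts ordered choices of two disjoint adjacent pairs $\{a,a+1\}$, $\{c,c+1\}$ in $\{1,\dots,k\}$. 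There are $\binom{k-2}{2}$ of these, giving $m_g(\rho)=\binom{k-2}{2}\rho^2+O(\rho^3)$.
\item For $g_1=X_1X_k\sum_{2\le i<j\le k-1}X_iX_j$, the same formula shows the pairing $(1,i)(j,k)$ has exponent $(i-1)+(k-j)\ge 2$, with equality only at $i=2$, $j=k-1$. The pairings $(1,k)(i,j)$ and $(1,j)(i,k)$ have exponent at least $k\ge 4$.
\item Hence $m_{g_1}(\rho)=\rho^2+O(\rho^3)$ and $m_{g_1}'(0)=0$. The same finite computation bounds the error terms uniformly for $|\rho|\le 1/2$.
\end{itemize}

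\emph{Mean shift.} For $f\in\{g,g_1\}$, write
\[
(n-k+1)^{-1/2}\sum_i f(Y_i)=(n-k+1)^{-1/2}\sum_i\bigl(f(Y_i)-m_f(\rho_n)\bigr)+\sqrt{n-k+1}\,m_f(\rho_n).
\]
The deterministic term is $\sqrt{n}\,(a_f c^2 n^{-1/2}+O(n^{-3/4}))\to a_f c^2$, where $a_g=\binom{k-2}{2}$ and $a_{g_1}=1$. It remains to show that the centered sum converges to $N(0,\sigma^2(f))$ under $\rho_n$, where $\sigma^2(f)$ is the null variance: Lemma~\ref{L:g_variance_general} for $g$, and $\sigma^2(g_1)=\|g_1\|^2$ from Proposition~\ref{P:g_Asymp_var} for $g_1$. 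Dividing by $\sigma(f)$ then gives the two stated limits.

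\emph{Centered CLT.} The alternatives at rate $n^{-1/4}$ are not contiguous, so Le Cam's third lemma is not available and I use a coupling instead.
\begin{itemize}
\item Let $(Z_t)$ be i.i.d.\ $N(0,1)$ and set $X_t^{(n)}=\sqrt{1-\rho_n^2}\sum_{j\ge 0}\rho_n^jZ_{t-j}$, which realizes the stationary AR(1) law with parameter $\rho_n$. Then $\|X_t^{(n)}-Z_t\|_{L^p}=O(\rho_n)$ for every $p$.
\item Put $D_i=f(Y_i^{(n)})-f(Y_i^{Z})-m_f(\rho_n)$. Since $f$ is a polynomial, Gaussian hypercontractivity gives $\|D_i\|_2=O(\rho_n)$.
\item For $|r|\ge k$, the covariance $\operatorname{Cov}(D_0,D_r)$ is at most $C\rho_n^{|r|-k}\cdot\rho_n^2$. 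I get this by splitting each $X^{(n)}$ into its part built from $Z$'s inside a block and a geometrically small remainder.
\item Summing over $r$ gives $\operatorname{Var}\bigl(n^{-1/2}\sum_i D_i\bigr)=O(\rho_n^2)\to 0$.
\item Hence the centered alternative sum is asymptotically equivalent to the null sum $n^{-1/2}\sum_i f(Y_i^Z)$, which converges to $N(0,\sigma^2(f))$ by the null CLT quoted from \cite{AKM}.
\end{itemize}

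The main obstacle is the covariance bound in the last step, since the $D_i$ are not $m$-dependent. If the direct bound proves awkward, I would fall back on truncating the MA($\infty$) series at lag $L_n\sim\log n$. This makes the $D_i$ $(k+L_n)$-dependent, the truncation error is $O(\rho_n^{L_n})=o(n^{-1})$ in $L^2$, and the variance bound then follows from counting only the $O(L_n)$ nonzero covariances per term, each of size $O(\rho_n^2)$.
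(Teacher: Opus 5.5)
Your argument is correct, and its first two steps match the paper's.

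\begin{itemize}
\item The Isserlis/Wick expansion with minimal exponent $2$ is Appendix~A. That appendix treats only $g_1$; your count of $\binom{k-2}{2}$ non-overlapping adjacent pairs supplies the $m_g$ expansion, which the paper states without derivation.
\item The mean shift $\sqrt{n-k+1}\,m_f(\rho_n)\to a_f c^2$ is the computation in Appendix~B.
\item Your restriction to $l=4$ is what the paper intends, as its normalizing constant $\sigma^2(g)=\binom{k}{4}+2\binom{k}{5}$ shows.
\end{itemize}

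The genuine difference is the distributional step. The paper only asserts that the null CLT persists because $\rho_n\to 0$ and ``the variance remains asymptotically unchanged.'' It also frames the section through LAN/Pitman arguments for contiguous alternatives. As you note, $\rho_n=c\,n^{-1/4}$ is not contiguous to the null. A statement about variances does not by itself give asymptotic normality along the alternatives.

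Your coupling supplies exactly this missing step. The covariance bound you flag as the main obstacle goes through directly, so the truncation fallback is not needed. For $r\ge k$, replace each $X^{(n)}_t$ with $t\ge r$ by its part built from $Z_s$ with $s\ge k$. This makes the modified $D_r$ independent of $D_0$, at an $L^2$ cost of $O(\rho_n^{r-k+1})$. Hence $|\operatorname{Cov}(D_0,D_r)|\le \|D_0\|_2\,O(\rho_n^{r-k+1})=O(\rho_n^{r-k+2})$, uniformly in $n$ and $r$.

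The paper's route is shorter but heuristic. Yours actually proves the stated limits. It also uses nothing about $g$ beyond being a null-centered polynomial. So it shows that for any polynomial window function and any $\rho_n\to 0$, the centered sum keeps its null limit. The local behavior is then governed by $\sqrt{n}\,m_f(\rho_n)$ alone, which is the design principle the paper states after the proposition.

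One caution: take $\sigma^2(g_1)=\|g_1\|^2=\binom{k-2}{2}$ directly from the computation of $\mathbb{E}[S^2]$. The closed form displayed in Proposition~\ref{P:g_Asymp_var} has the prefactor $(k-2)(k-3)/4$, where $(k-2)(k-3)/2$ is correct.
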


\noindent Under the i.i.d.\ $N(0,1)$ null, the normalizing constants are
\[
\sigma^2(g_1)=\binom{k-2}{2},
\qquad
\sigma^2(g)=\binom{k}{4}+2\binom{k}{5}.
\]
These two identities follow from
Proposition~\ref{P:g_Asymp_var} and Lemma~\ref{L:g_variance_general} respectively.

\noindent
\emph{Benchmark (first-order serial statistic).}
For comparison, consider the adjacent-pair window function
$f_{\mathrm{adj}}(x_1,\ldots,x_k)=\sum_{t=2}^k x_{t-1}x_t$.
Then $m_{f_{\mathrm{adj}}}(\rho)=(k-1)\rho$, so under the usual contiguous scaling $\rho_n=c/\sqrt{n}$,
\[
\frac{(n-k+1)^{-1/2}\sum_{i=1}^{n-k+1} f_{\mathrm{adj}}(Y_i)}{\sigma(f_{\mathrm{adj}})}
\Rightarrow
N\!\left(\frac{(k-1)c}{\sigma(f_{\mathrm{adj}})},\,1\right).
\]

\medskip
These calculations highlight a practical design principle: the local detection rate depends on the
lowest-order nonvanishing term in the expansion of $m_f(\rho)$ at $\rho=0$.
Even-order polynomial detectors such as $g$ (and hence $g_1$) are second-order at the i.i.d.\ null,
whereas window functions containing adjacent products attain first-order sensitivity.

\subsection*{Local asymptotics for the trend statistic $K$}

We consider local drift alternatives of the form
\begin{equation}\label{E:drift_alt}
X_t = \varepsilon_t + \beta_n\Bigl(t - \frac{n+1}{2}\Bigr), \qquad 1 \le t \le n,
\end{equation}
where $\{\varepsilon_t\}$ are i.i.d.\ with a continuous distribution function $F$ and density $f$, and where $\beta_n \to 0$. 
The centering ensures that the deterministic drift has zero average over the sample, while preserving the pairwise differences
\[
X_{t+\Delta} - X_t = (\varepsilon_{t+\Delta} - \varepsilon_t) + \beta_n \Delta.
\]

\begin{proposition}
Let
\[
m_K(\beta) = \mathbb{E}_\beta\bigl[K(X_1,\ldots,X_k)\bigr],
\]
where the expectation is taken under the local window model $X_j = \varepsilon_j + \beta j$, $1 \le j \le k$. 
Assume that the density $h$ of $\varepsilon_2 - \varepsilon_1$ is continuous at $0$. Then, as $\beta \to 0$,
\[
m_K(\beta)
=
2 h(0)\, \beta \sum_{\Delta=1}^{k-1} (k-\Delta)\alpha_\Delta \Delta
+ o(\beta).
\]
In particular, $K$ is first-order at the null, and the appropriate local scaling is
\[
\beta_n = c\, n^{-1/2}.
\]
Under this scaling, the normalized sliding-window statistic based on $K$ admits a nondegenerate local asymptotic power function.
\end{proposition}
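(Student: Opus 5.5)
By linearity of expectation, the plan is to reduce $m_K(\beta)$ to a weighted sum of single-lag sign expectations and then expand each of these to first order in $\beta$. Under the window model $X_j=\varepsilon_j+\beta j$ we have $X_{j+\Delta}-X_j=D_{j,\Delta}+\beta\Delta$ with $D_{j,\Delta}=\varepsilon_{j+\Delta}-\varepsilon_j$. Because the $\varepsilon$'s are i.i.d., every $D_{j,\Delta}$ has the same law as $\varepsilon_2-\varepsilon_1$, with density $h$ and distribution function $H$. This law is symmetric about $0$ and continuous, so $H(0)=\tfrac12$. Hence
\[
m_K(\beta)=\sum_{\Delta=1}^{k-1}(k-\Delta)\alpha_\Delta\,\psi(\beta\Delta),
\qquad
\psi(u):=\mathbb E\bigl[\operatorname{sgn}(D+u)\bigr],
\]
where $D\sim h$. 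The sum over $j$ contributes the factor $k-\Delta$, since the inner summand does not depend on $j$ in distribution.

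Next I compute $\psi$ explicitly. Since $D$ has no atoms, $\psi(u)=P(D>-u)-P(D<-u)=1-2H(-u)$. By symmetry $H(-u)=1-H(u)$, so $\psi(u)=2H(u)-1$, and in particular $\psi(0)=0$. This recovers the null centering, Lemma~\ref{L:sgn}(e).

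Continuity of $h$ at $0$ gives $H(u)=H(0)+h(0)u+o(u)$. This follows from the fundamental theorem of calculus, or directly from $\bigl|\int_0^u (h(s)-h(0))\,ds\bigr|\le |u|\sup_{|s|\le|u|}|h(s)-h(0)|$. Thus $\psi(u)=2h(0)u+o(u)$. Substituting $u=\beta\Delta$ with $\Delta\le k-1$ fixed, and summing finitely many $o(\beta)$ terms, yields
\[
m_K(\beta)=2h(0)\,\beta\sum_{\Delta=1}^{k-1}(k-\Delta)\alpha_\Delta\Delta+o(\beta).
\]

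For the scaling claim, the argument runs as follows. Under the drift model the centering term $(n+1)/2$ cancels in all differences, so each window of the full sample has exactly the law of the window model with $\beta=\beta_n$. Consequently $\mathbb E\,S_K(n)=\sqrt{n-k+1}\,m_K(\beta_n)$. With $\beta_n=cn^{-1/2}$ this mean converges to $2h(0)c\sum_\Delta(k-\Delta)\alpha_\Delta\Delta$, which is finite and nonzero whenever $h(0)>0$ and the weighted sum is nonzero. Any slower decay $\gamma<1/2$ would send it to infinity, and any faster decay would send it to $0$.

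The variance and the CLT require a separate argument. The summands $K(Y_i)-\mathbb E K(Y_i)$ are $(k-1)$-dependent with a uniformly bounded kernel, and their covariances converge to the null values as $\beta_n\to0$ by dominated convergence, because $\operatorname{sgn}$ is continuous off a null set. A CLT for $m$-dependent triangular arrays (Hoeffding--Robbins type) then gives $N(\mu_c,\sigma_K^2)$ with $\mu_c$ the limit above, so the local power function is nondegenerate. I expect the main technical point to be this triangular-array step, specifically justifying that the covariances converge to their null values; boundedness of $\operatorname{sgn}$ and continuity of $F$ should handle it.
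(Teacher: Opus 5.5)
Your proposal is correct and follows the paper's proof: reduce by linearity to $\mathbb E\,\operatorname{sgn}(D_{j,\Delta}+\beta\Delta)$, expand this to first order using the symmetry of $D$ and the value $h(0)$, and sum over $j$ to get the multiplicity $k-\Delta$. You go further than the paper in two places. The paper only asserts $\mathbb E\,\operatorname{sgn}(D+a)=2h(0)a+o(a)$, which your identity $\psi(u)=2H(u)-1$ together with continuity of $h$ at $0$ justifies. The paper also gives no argument for the $n^{-1/2}$ scaling or the nondegenerate local power, and you supply one through the mean computation, the translation invariance of $K$, and the $m$-dependent triangular-array CLT, correctly noting that nondegeneracy needs $h(0)>0$ and $\sum_{\Delta}(k-\Delta)\alpha_\Delta\Delta\neq 0$.
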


\begin{proof}
For each fixed $\Delta$ and $j$, write
\[
X_{j+\Delta} - X_j = D_{j,\Delta} + \beta \Delta,
\qquad
D_{j,\Delta} = \varepsilon_{j+\Delta} - \varepsilon_j.
\]
Then
\[
\mathbb{E}_\beta\bigl[\operatorname{sgn}(X_{j+\Delta} - X_j)\bigr]
=
\mathbb{E}\bigl[\operatorname{sgn}(D_{j,\Delta} + \beta \Delta)\bigr].
\]
Since $D_{j,\Delta}$ has a symmetric distribution with density $h$ at zero,
\[
\mathbb{E}\bigl[\operatorname{sgn}(D_{j,\Delta} + a)\bigr]
=
2 h(0)\, a + o(a),
\qquad a \to 0.
\]
Substituting $a = \beta \Delta$ and summing over $j$ and $\Delta$ yields the claimed formula.
\end{proof}

\paragraph{Remark on the projected statistic.}
The corresponding projected statistic $K_1$ does not appear to admit a standard Pitman-type local asymptotic regime under drift alternatives. 
Extensive simulations indicate that, under scalings of the form $\beta_n = c\,n^{-\gamma}$, the rejection probabilities do not stabilize as $n$ increases but instead continue to grow, suggesting the absence of a nondegenerate local limit. 
This behavior contrasts sharply with that of $K$, and reflects the highly localized nature of $K_1$, which relies on a single endpoint comparison rather than aggregation across lags.

The preceding local calculations serve mainly to identify the relevant detection
scales for the window statistics considered here. In particular, the quartic
correlation statistics $g$ and $g_1$ are second-order at the i.i.d.\ null under
Gaussian AR(1) perturbations, leading to the local scale $\rho_n \asymp n^{-1/4}$,
whereas the trend statistic $K$ is first-order under drift alternatives, with the
usual scale $\beta_n \asymp n^{-1/2}$. The expansion details for the correlation
statistics are outlined in Appendices A and B. These conclusions guide the simulation
settings used in the next section.


\section{Simulation study}

We investigate the finite-sample performance of the proposed statistics under
correlation and trend alternatives. The simulation settings are chosen to reflect the
local asymptotic considerations of Section~\ref{sS:local_asymp}: for the correlation statistics $g$
and $g_1$, we consider Gaussian AR(1) alternatives, while for the trend statistic we
consider drift alternatives. All reported rejection probabilities are based on $3000$
Monte Carlo replications. We also verified by simulation that all statistics are
correctly calibrated under the null: across all configurations considered, the
empirical rejection probabilities remained within Monte Carlo error of the nominal
$5\%$ level.

\subsection{Correlation statistics}

We first consider the statistics $g$ and $g_1$ under AR(1) alternatives in (\ref{E:AR_model}).

Under fixed alternatives, the statistic $g$ exhibits strong and monotone power as $\rho$ increases, with substantial gains as the sample size grows. The projected statistic $g_1$ is uniformly less powerful, reflecting its role as an incremental component isolating higher-order contributions.

Under local alternatives $\rho_n = c\,n^{-1/4}$, both statistics display stable rejection probabilities across $n$ for fixed $c$, while increasing with $c$. In contrast, under $\rho_n = c\,n^{-1/2}$, rejection probabilities decrease toward the nominal level. This confirms the second-order nature of the statistics and validates the scaling $n^{-1/4}$.


\begin{table}[ht]
\centering
\begin{minipage}{0.48\textwidth}
\centering
\begin{tabular}{c|ccccc}
\hline
$c \backslash n$ & 100 & 200 & 400 & 600 & 800 \\
\hline
0.5 & 0.095 & 0.089 & 0.086 & 0.087 & 0.083 \\
1.0 & 0.237 & 0.235 & 0.203 & 0.190 & 0.192 \\
1.5 & 0.526 & 0.520 & 0.536 & 0.517 & 0.525 \\
\hline
\end{tabular}
\caption{Empirical power of $g_1$ under local alternatives $\rho_n = c\,n^{-1/4}$. The rejection probabilities remain approximately stable across $n$, confirming the second-order local asymptotic regime.}
\label{tab:g1_correct_scaling}
\end{minipage}
\hfill
\begin{minipage}{0.48\textwidth}
\centering
\begin{tabular}{c|ccccc}
\hline
$c \backslash n$ & 100 & 200 & 400 & 600 & 800 \\
\hline
0.5 & 0.054 & 0.059 & 0.053 & 0.057 & 0.051 \\
1.0 & 0.069 & 0.066 & 0.063 & 0.054 & 0.051 \\
1.5 & 0.093 & 0.072 & 0.066 & 0.070 & 0.059 \\
\hline
\end{tabular}
\caption{Empirical power of $g_1$ under local alternatives $\rho_n = c\,n^{-1/2}$. The rejection probabilities decrease toward the nominal level, indicating that this scaling is asymptotically too weak.}
\label{tab:g1_wrong_scaling}
\end{minipage}
\end{table}


\begin{figure}[ht]
\centering
\includegraphics[width=0.95\textwidth]{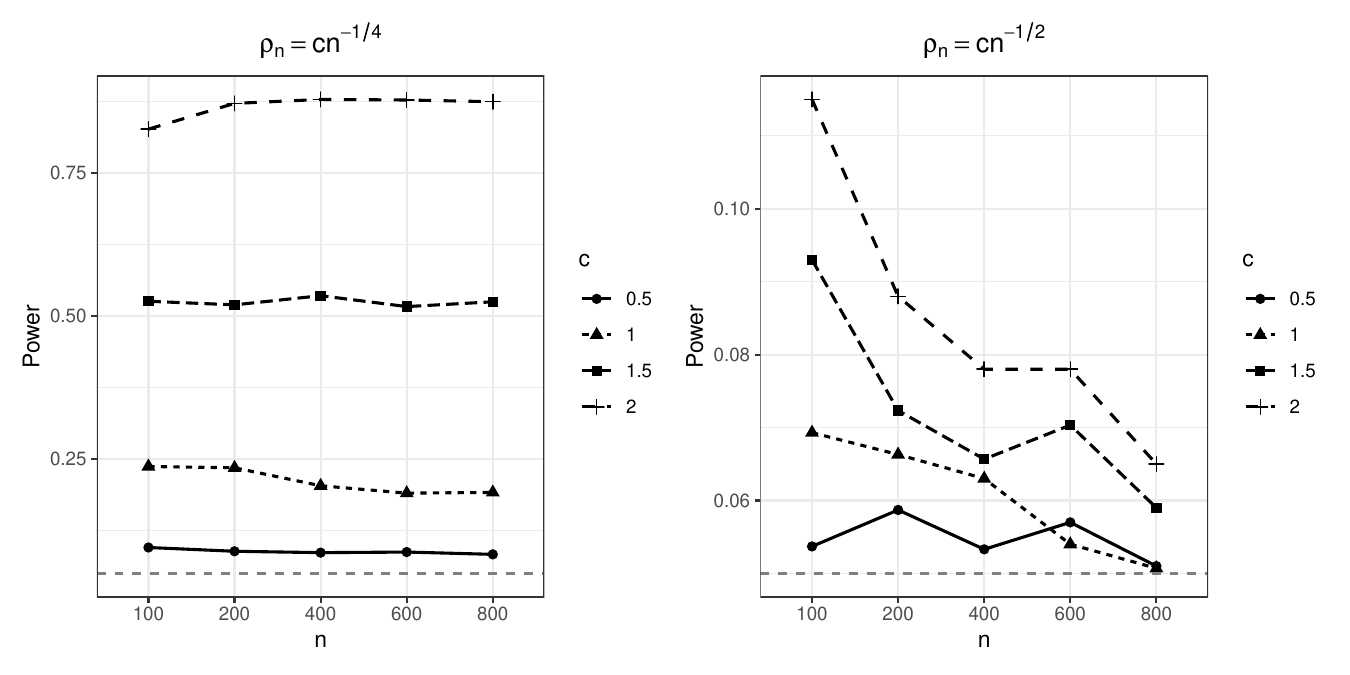}
\caption{Empirical power of $g_1$ under local AR(1) alternatives. Left: correct scaling $\rho_n = c\,n^{-1/4}$, showing approximately stable rejection probabilities across $n$. Right: incorrect scaling $\rho_n = c\,n^{-1/2}$, showing rejection probabilities drifting toward the nominal level.}
\label{fig:g1_local_power}
\end{figure}

\begin{figure}[ht]
\centering
\includegraphics[width=0.95\textwidth]{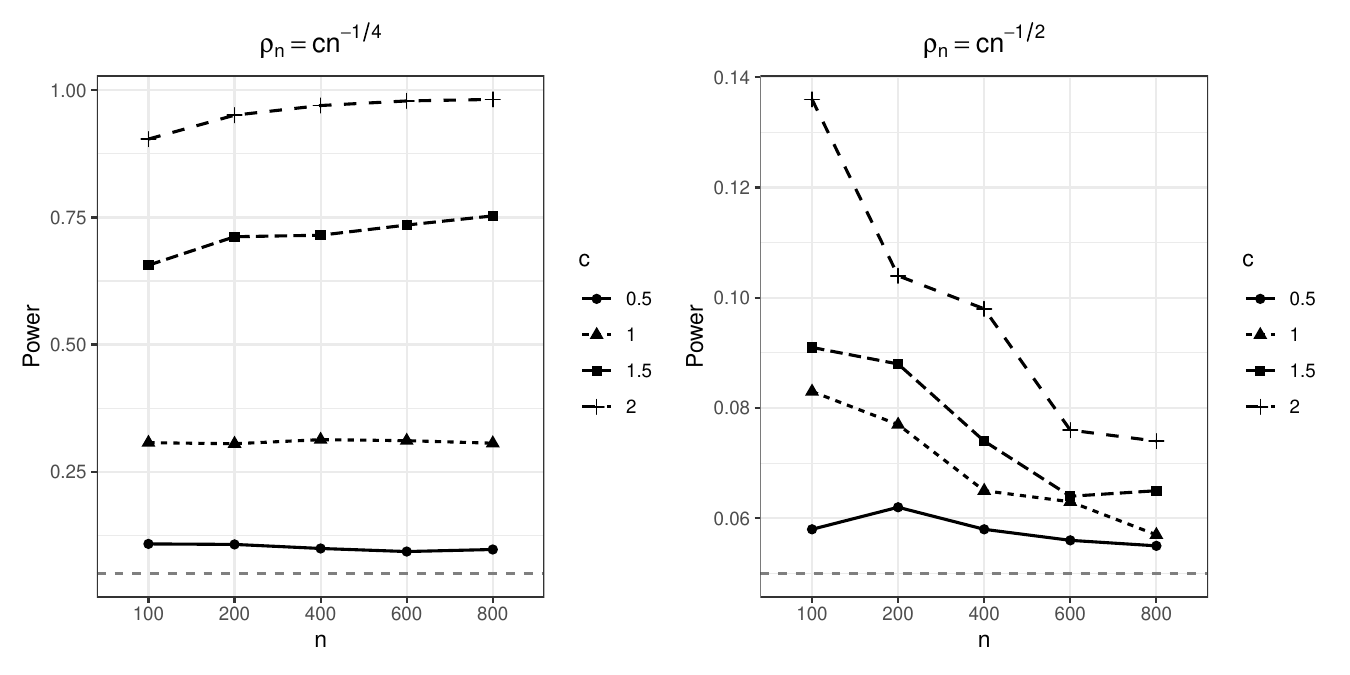}
\caption{Empirical power of the unprojected $g$ under local AR(1) alternatives. Left: correct scaling $\rho_n = c\,n^{-1/4}$, showing approximately stable rejection probabilities across $n$. Right: incorrect scaling $\rho_n = c\,n^{-1/2}$, showing rejection probabilities drifting toward the nominal level.}
\label{fig:g1_local_power}
\end{figure}

\subsection{Trend statistics and the role of weighting}

We examine the behavior of the unprojected  statistic $K(x_1,\ldots,x_k)$ 
under local drift alternatives of the form given in (\ref{E:drift_alt}) with normally distributed errors $\varepsilon_t \sim N(0,1)$.
\smallskip
\paragraph{Local regime.}
Under the scaling $\beta_n = c\,n^{-1/2}$, the rejection probabilities remain stable across $n$ for fixed $c$, confirming the first-order nature of $K$. A representative set of results for the baseline choice $(\alpha_1,\alpha_2,\alpha_3)=(1,1,1)$ is reported in Table~\ref{tab:K_equal_weights}.
\smallskip

\paragraph{Effect of weighting.}
The coefficients $\alpha_\Delta$ allow different lag separations to contribute unequally. Under drift alternatives, larger lags carry stronger deterministic shifts, suggesting that increasing weights with $\Delta$ may improve power. This effect is clearly visible in Figure~\ref{fig:K_weights}, where the increasing scheme $(1,2,3)$ dominates the baseline $(1,1,1)$ across all values of $c$. 
More aggressive weighting schemes further improve the power, but with diminishing returns: $(1,2,3)$ and $(1,2,4)$ perform similarly, indicating that most of the available signal is already captured by moderate weighting, and that additional emphasis on the largest lag yields only marginal gains.

\smallskip
\paragraph{Lack of aggregation.}
In contrast, when only the largest lag is retained, as in $(\alpha_1,\alpha_2,\alpha_3)=(0,0,1)$, the statistic loses the benefit of aggregation and exhibits substantially weaker power, as shown in Table~\ref{tab:K_endpoint}.
\smallskip
\paragraph{Interpretation.}
These results demonstrate that the coefficients $\alpha_\Delta$ play a fundamental role: they control how signal accumulates across lags. While $K$ is first-order regardless of the choice of weights, its efficiency depends critically on the degree of aggregation, with increasing weight schemes providing the strongest performance under drift.

\begin{table}[ht]
\centering
\begin{minipage}{0.48\textwidth}
\centering
\begin{tabular}{c|cccc}
\hline
$c \backslash n$ & 100 & 200 & 400 & 800 \\
\hline
0.25 & 0.451 & 0.470 & 0.448 & 0.464 \\
0.50 & 0.663 & 0.663 & 0.669 & 0.667 \\
0.75 & 0.834 & 0.852 & 0.846 & 0.851 \\
1.00 & 0.947 & 0.954 & 0.948 & 0.956 \\
\hline
\end{tabular}
\caption{Empirical power of $K$ under local drift alternatives $\beta_n = c\,n^{-1/2}$ with weights $(1,1,1)$. The rejection probabilities are stable across $n$ and increase with $c$.}
\label{tab:K_equal_weights}
\end{minipage}
\hfill
\begin{minipage}{0.48\textwidth}
\centering
\begin{tabular}{c|cccc}
\hline
$c \backslash n$ & 100 & 200 & 400 & 800 \\
\hline
0.25 & 0.004 & 0.005 & 0.002 & 0.003 \\
0.50 & 0.020 & 0.024 & 0.022 & 0.021 \\
0.75 & 0.090 & 0.104 & 0.100 & 0.112 \\
1.00 & 0.269 & 0.298 & 0.288 & 0.308 \\
\hline
\end{tabular}
\caption{Empirical power of $K$ under local drift alternatives $\beta_n = c\,n^{-1/2}$ with weights $(0,0,1)$. The absence of aggregation leads to substantially weaker performance.}
\label{tab:K_endpoint}
\end{minipage}
\end{table}

\begin{figure}[ht]
\centering
\includegraphics[width=0.6\textwidth]{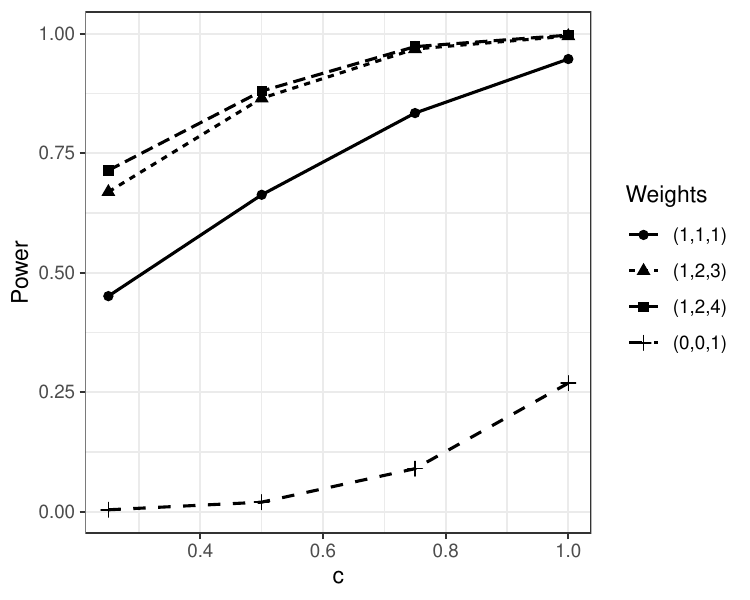}
\caption{Effect of weighting coefficients $\alpha_\Delta$ on the power of $K$ under local drift alternatives ($n=100$). Increasing weights enhance sensitivity, while endpoint-only weighting leads to substantial loss of power.}
\label{fig:K_weights}
\end{figure}

\subsection{Projected trend statistic}

Finally, we consider the projected statistic $K_1$, which isolates the endpoint contribution.

\paragraph{Empirical local behavior.}
In contrast to the full statistic $K$, the projected statistic $K_1$ does not exhibit a stable local asymptotic regime under drift alternatives. To investigate this, we examined a range of candidate scalings of the form
\[
\beta_n = c\,n^{-\gamma},
\]
including $\gamma=1/2$, $\gamma=1/4$, and $\gamma=1/6$.

The resulting rejection probabilities fail to display the characteristic stabilization across $n$ associated with a standard Pitman regime. For $\gamma=1/2$, the power remains close to the nominal level except for relatively large values of $c$, indicating that the signal is too weak. For $\gamma=1/6$, the power increases rapidly toward one, indicating that the signal is too strong. Intermediate scalings, such as $\gamma=1/4$, still produce rejection probabilities that increase with $n$ rather than stabilizing.

Figure~\ref{fig:K1_scaling} summarizes the contrasting behavior across the three candidate scalings.

\begin{figure}[ht]
\centering
\includegraphics[width=\textwidth]{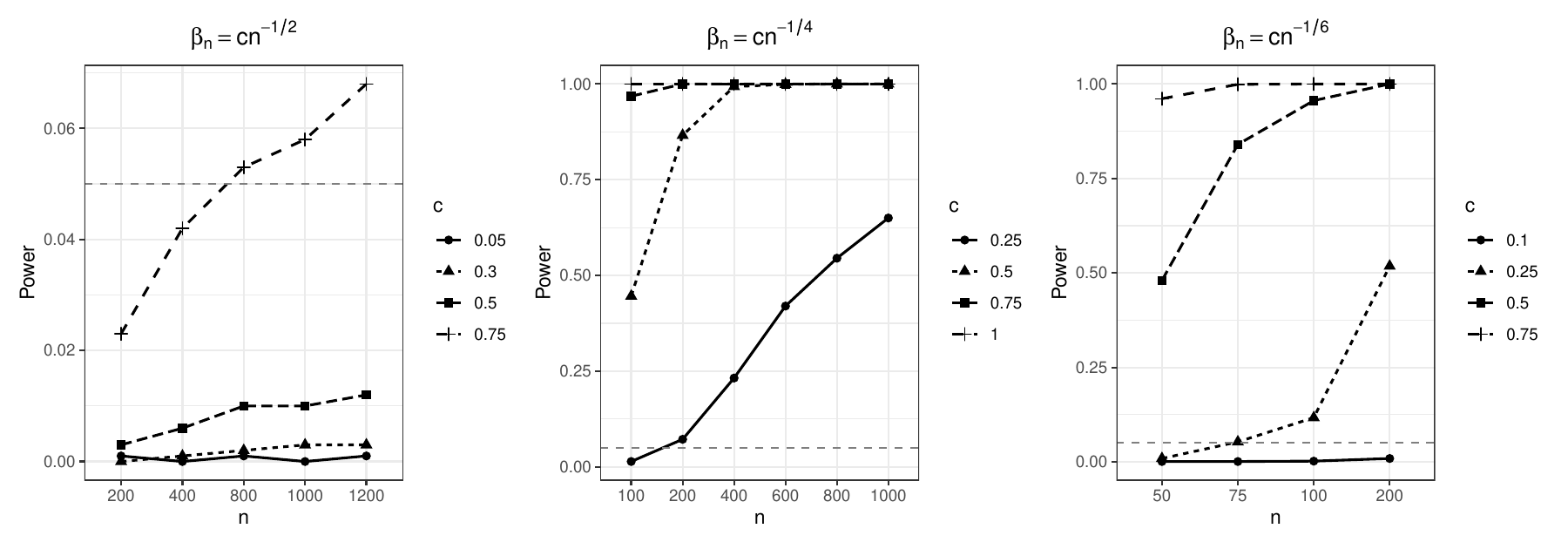}
\caption{Empirical power of $K_1$ under local drift alternatives $\beta_n=c\,n^{-\gamma}$ for three candidate scalings. Left: $\gamma=1/2$, where the signal remains weak. Middle: $\gamma=1/4$, where the power continues to increase with $n$ rather than stabilizing. Right: $\gamma=1/6$, where the power increases too rapidly toward one. No choice of $\gamma$ yields the stabilization characteristic of a standard Pitman local asymptotic regime.}
\label{fig:K1_scaling}
\end{figure}

\paragraph{Interpretation.}
Taken together, these findings provide strong empirical evidence that $K_1$ does not appear to admit a nondegenerate Pitman local asymptotic regime under drift alternatives. This behavior contrasts sharply with that of $K$, and reflects the highly localized structure of $K_1$, which relies on a single endpoint comparison rather than aggregation across lags.

\subsection{Summary}

The simulations reveal three distinct regimes:

\begin{itemize}
\item $g$ and $g_1$: second-order behavior with scaling $n^{-1/4}$;
\item $K$: first-order behavior with scaling $n^{-1/2}$;
\item $K_1$: absence of a standard local asymptotic regime.
\end{itemize}




\section{Concluding Remarks}

We have developed a framework for analyzing overlapping window statistics through an
orthogonal decomposition of $\mathcal H_k$, together with an auxiliary decomposition that
facilitates explicit calculations. This viewpoint isolates the incremental contribution
introduced by enlarging the window and yields a spectral representation of the asymptotic
variance. The examples considered here illustrate how the method applies both to localized
polynomial correlation statistics and to localized rank-based trend statistics.

Two directions seem especially natural for further study. The first concerns scale selection. The relative incremental index introduced in this work provides a quantitative criterion for choosing the window size, and the examples suggest that this index may decrease with \(k\), in some cases at a rate of order \(k^{-3}\). A natural question is to identify general classes of window functions, together with conditions on their structure and weighting, under which such monotonicity or decay rates hold. Establishing a unifying theorem in this direction would clarify the mechanisms governing how dependence information accumulates across scales.
The second direction concerns extensions beyond the i.i.d.\
setting. In the independent case, the incremental component is especially striking in
that it lies in the eigenspace corresponding to eigenvalue $1$ and is not contaminated
by a kernel component. It is therefore of clear theoretical and practical interest to
determine to what extent such a clean structure survives under dependence.


\appendix
\section{Mean expansion under Gaussian AR(1)}


We sketch the derivation of the expansion
\[
m_{g_1}(\rho)=\mathbb{E}_\rho[g_1(Y_1)]=\rho^2+O(\rho^3)
\]
for the statistic $g_1$ defined in \eqref{E:l=4,k}, under a Gaussian AR(1) model with parameter $\rho$.

Let $X=(X_1,\dots,X_k)$ be a centered Gaussian vector with covariance
\[
\Sigma_{ij}=\rho^{|i-j|}.
\]
Since $(X_i,X_j,X_r,X_s)$ is jointly centered Gaussian, Wick's formula
for Gaussian moments (see, e.g., Janson~\cite{Janson1997}) yields, for distinct indices $a<b<c<d$,
\[
\mathbb{E}[X_aX_bX_cX_d]
=
\Sigma_{ab}\Sigma_{cd}
+
\Sigma_{ac}\Sigma_{bd}
+
\Sigma_{ad}\Sigma_{bc}.
\]

We consider the expansion of $\mathbb{E}_\rho[g_1]$ for small $\rho$. Each covariance term contributes a factor of $\rho^{|i-j|}$, so the leading order is determined by the smallest total exponent across pairings.

In the expression for $g_1$, all terms are of the form
\[
(X_1-\mu)(X_k-\mu)X_iX_j, \qquad 2\le i<j\le k-1.
\]
After centering, the expectation reduces to evaluating $\mathbb{E}[X_1X_kX_iX_j]$.

Applying Wick's formula, we obtain
\[
\mathbb{E}[X_1X_kX_iX_j]
=
\Sigma_{1k}\Sigma_{ij}
+
\Sigma_{1i}\Sigma_{kj}
+
\Sigma_{1j}\Sigma_{ki}.
\]

The first term is of order $\rho^{k-1+|i-j|}$ and is negligible for small $\rho$.
The remaining two terms are of order $\rho^{(i-1)+(k-j)}$ and $\rho^{(j-1)+(k-i)}$, respectively.

The smallest possible exponent is $2$, which occurs only when $\{i,j\}=\{2,k-1\}$.
In that case,
\[
(i-1)+(k-j)=1+1=2.
\]

Thus, exactly one unordered pair contributes at order $\rho^2$, and we obtain
\[
m_{g_1}(\rho)=\rho^2+O(\rho^3).
\]

This shows that $g_1$ is second-order at the independence null.
\qed

\section{Local scaling for second-order alternatives}

We justify the scaling $\rho_n = c\,n^{-1/4}$ for the statistic $g_1$. Let
$T_n(f)=\displaystyle\frac{S_n(f)}{\sigma(f)}$, 
and define
\[
m(\rho)=\mathbb{E}_\rho[f(Y_1)].
\]

Assume that $m(\rho)$ admits the expansion $m(\rho)=a\rho^2+o(\rho^2)$ 
as $\rho\to 0$, with $a\ne 0$.

\begin{proposition}
Let $\rho_n = c\,n^{-1/4}$. Then
\[
\mathbb{E}_{\rho_n}[S_n(f)]
=
\sqrt{n}\,m(\rho_n)
=
a c^2 + o(1).
\]
Consequently,
\[
T_n(f)\;\Rightarrow\; N\!\left(\frac{a c^2}{\sigma(f)},\,1\right).
\]
\end{proposition}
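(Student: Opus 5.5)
The plan is to split $S_n(f)$ into its mean and a centered fluctuation, show the mean converges to $ac^2$, and show the fluctuation is asymptotically $N(0,\sigma^2(f))$ along the sequence of local alternatives.

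\textbf{The mean.} Under the stationary Gaussian AR(1) model each window $Y_i$ has the same law as $Y_1$. Hence
\[
\mathbb{E}_{\rho_n}[S_n(f)]=\frac{n-k+1}{\sqrt{n-k+1}}\,m(\rho_n)=\sqrt{n-k+1}\,m(\rho_n).
\]
Since $k$ is fixed, $\sqrt{n-k+1}=\sqrt n\,(1+O(1/n))$. Substituting $\rho_n=cn^{-1/4}$ into $m(\rho)=a\rho^2+o(\rho^2)$ gives $m(\rho_n)=ac^2n^{-1/2}+o(n^{-1/2})$. Therefore $\sqrt n\,m(\rho_n)=ac^2+o(1)$, which is the first claim.

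\textbf{The fluctuation.} I would write
\[
S_n(f)-\mathbb{E}_{\rho_n}S_n(f)=\frac{1}{\sqrt{n-k+1}}\sum_i\bigl(f(Y_i)-m(\rho_n)\bigr)
\]
and prove it converges to $N(0,\sigma^2(f))$ under $\rho_n$. This is the main obstacle, because the data form a triangular array whose law changes with $n$; a fixed-$\rho$ CLT is not enough by itself.

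\emph{Convergence of the long-run variance.} By stationarity,
\[
\operatorname{Var}_{\rho}\bigl(S_n(f)\bigr)=\sum_{|r|<n-k+1}\Bigl(1-\frac{|r|}{n-k+1}\Bigr)\gamma_\rho(r),\qquad \gamma_\rho(r)=\operatorname{Cov}_\rho\bigl(f(Y_1),f(Y_{1+r})\bigr).
\]
For polynomial $f$, Wick's formula (as in Appendix A) shows each $\gamma_\rho(r)$ is a polynomial in $\rho$. For $r\ge k$ the windows are disjoint, so every cross term in Wick's expansion contains at least one factor $\rho^{j}$ with $j\ge r-k+1$. This gives $|\gamma_\rho(r)|\le C|\rho|^{r-k+1}$ uniformly for $|\rho|\le 1/2$. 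Dominated convergence then yields $\operatorname{Var}_{\rho_n}(S_n(f))\to\sum_{|r|<k}\gamma_0(r)=\sigma^2(f)$, which is formula \eqref{E:Gen_Asymp_Var}.

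\emph{CLT for the triangular array.} I see two routes.
\begin{enumerate}
\item Realize the process as $X_t=\sum_{j\ge0}\rho_n^j\varepsilon'_{t-j}\sqrt{1-\rho_n^2}$ with i.i.d.\ $N(0,1)$ innovations $\varepsilon'_t$. Truncate at lag $m_n\to\infty$ slowly, which produces an $(m_n+k)$-dependent array. The truncation error in $L^2$ is $O(|\rho_n|^{m_n})$ per term and is negligible even after the $\sqrt n$ scaling. Then apply a CLT for $m$-dependent triangular arrays, e.g.\ Bernstein blocking with Lyapunov's condition, using the uniform Gaussian moment bounds.
\item Note that $X_t=\rho_n\sum_{j\ge1}\rho_n^{j-1}\varepsilon'_{t-j}+\sqrt{1-\rho_n^2}\,\varepsilon'_t$, so $X=\varepsilon'+O_{L^p}(\rho_n)$ coordinatewise. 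Write $S_n(f)$ under $\rho_n$ as the null statistic plus a perturbation. Show the perturbation's deviation from its mean is $o_P(1)$ via variance bounds of the same Wick type, and invoke the null CLT from \cite{AKM}.
\end{enumerate}
I would try route 1 first, since its hypotheses are easiest to check explicitly.

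\textbf{Conclusion.} Slutsky's lemma combines the two parts: $S_n(f)\Rightarrow N(ac^2,\sigma^2(f))$. Dividing by $\sigma(f)$ gives $T_n(f)\Rightarrow N\!\left(ac^2/\sigma(f),1\right)$.
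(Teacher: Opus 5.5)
Your proof is correct. Its skeleton is the paper's: compute the mean $\sqrt{n}\,m(\rho_n)=ac^2+o(1)$ from stationarity and the expansion of $m$, then combine it with a centered $N(0,\sigma^2(f))$ limit via Slutsky.

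The difference is in the second step, which the paper handles in one sentence (the null CLT holds and, ``since $\rho_n\to0$, the variance remains asymptotically unchanged''). That sentence is a heuristic rather than a proof, for two reasons:
\begin{itemize}
\item convergence of variances does not by itself give asymptotic normality of a triangular array;
\item the null limit cannot be transferred by contiguity, because the alternatives $\rho_n=c\,n^{-1/4}$ are not contiguous to the i.i.d.\ null. The first-order statistic $n^{-1/2}\sum_t X_tX_{t+1}$ has null variance $1$ but mean of order $c\,n^{1/4}$ under $\rho_n$.
\end{itemize}
Your Wick bound $|\gamma_\rho(r)|\le C|\rho|^{r-k+1}$ with dominated convergence, together with either of your CLT routes, supplies what the paper leaves out.

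Of your two routes, route~2 is the more economical. Coupling the AR(1) path with its own innovations reduces everything to the null CLT of \cite{AKM}. The only estimate needed is that the centered difference of the two statistics has variance $O(\rho_n^2)$. This follows from the same cross-pairing argument: the lag-$r$ covariances of the per-window differences are $O(\min\{\rho_n^2,|\rho_n|^{r-k+1}\})$, which is summable in $r$ with total $O(\rho_n^2)$. It also gives the variance limit for free. In route~1 you do not need $m_n\to\infty$: any fixed truncation lag $m\ge 2$ works, since $\sqrt n\,|\rho_n|^{m+1}\to 0$.

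Two small points.
\begin{itemize}
\item In route~2 the sum $\sum_{j\ge1}\rho_n^{j-1}\varepsilon'_{t-j}$ should carry the factor $\sqrt{1-\rho_n^2}$. This is harmless, since the term is still $O(\rho_n)$.
\item Your estimates use that $f$ is a polynomial. That covers $g$ and $g_1$, the case this appendix is written for. For a general $f\in\mathcal H_k$ one needs additional integrability under the correlated Gaussian law, since square-integrability under $\pi_k$ does not ensure it; the statement leaves this hypothesis implicit.
\end{itemize}
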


\begin{proof}
By linearity of expectation,
\[
\mathbb{E}_{\rho_n}[S_n(f)]
=
\frac{1}{\sqrt{n}}\sum_{i=1}^{n-k+1}\mathbb{E}_{\rho_n}[f(Y_i)]
=
\sqrt{n}\,m(\rho_n)+o(1).
\]

Using the expansion of $m(\rho)$ and multiplying by 
$\sqrt{n}$ gives
\[
\mathbb{E}_{\rho_n}[S_n(f)]
=
a c^2 + o(1).
\]

Under the null, $S_n(f)$ satisfies a central limit theorem with variance $\sigma^2(f)$.
Since $\rho_n\to 0$, the variance remains asymptotically unchanged, yielding the stated convergence.
\end{proof}

\medskip

If instead $\rho_n=c\,n^{-1/2}$, then
\[
\sqrt{n}\,m(\rho_n)=O(n^{-1/2})\to 0,
\]
and the limiting distribution is the same as under the null. This explains the collapse of power under the usual contiguous scaling.


\begin{thebibliography}{99}
\bibitem{AA04} A. Alhakim, \textit{On the eigenvalues and
eigenvectors of an overlapping Markov chain}, Probability
Theory and Related Fields, 128 (2004), pp. 589-605.

\bibitem{AA2024} A. Alhakim, \textit{Hadamard matrices, quaternions, and the Pearson
chi-square statistic}, Statistical Papers, 65(8), (2024), pp. 5273–5291.

\bibitem{AKM} A. Alhakim, J. Kawczak, and S. Molchanov, \textit{On
the class of nilpotent Markov chains, I: the spectrum of
covariance operator}, Markov Processes and Related Fields
10 (2004), pp. 629-652.


\bibitem{Cabilio2013} P. Cabilio, Y. Zhang, X. Chen, \textit{ Bootstrap rank tests for trend in time series}, Environmetrics, V 24, 8 (2013), pp. 537 - 549.




\bibitem{Good1953} Good, I.J., (1953) ``\textit{The serial test for sampling numbers and other tests of randomness}''. In Proceedings of Cambridge Philosophical Society, 49, 276-284.

\bibitem{GordinLifsic1978} M. I. Gordin and  B. A. Lif\v{s}ic, \textit{Central limit theorem for stationary Markov processes}, Soviet Mathematics Doklady, 19, (1978), pp. 392--394.

\bibitem{HoeffdingRobbins1948}
W. Hoeffding and H. Robbins,
The central limit theorem for dependent random variables,
\emph{Duke Mathematical Journal} 15 (1948), 773--780.

\bibitem{Janson1997} S. Janson, Gaussian Hilbert Spaces, Cambridge University Press, 1997.

\bibitem{Kendall1955} M. G. Kendall,  Rank Correlation Methods,  Griffin, 1955.


\bibitem{LeCam1986} L. Le Cam, Asymptotic Methods in Statistical Decision Theory, Springer, 1986.

\bibitem{LeCamYang2000} L. Le Cam and G. L. Yang, Asymptotics in Statistics: Some Basic Concepts, Springer, 2000.

\bibitem{LehmannRomano2005} E.L. Lehmann and J.P. Romano, Testing Statistical Hypotheses, 3rd. Ed., Springer, 2005.

\bibitem{LSW2002} P. L'Ecuyer, R. Simard and S. Wegenkittl, \textit{Sparse Serial Tests of Uniformity for Random Number Generators}, SIAM Journal On Scientific Computing, 24, 2 (2002), pp. 652-668.

\bibitem{Mann1945} H.B. Mann, \textit{Nonparametric tests against trend}, Econometrica, Journal of the Econometric Society, (1945) pp. 245-259.

\bibitem{MeynTweedie2009} S. P. Meyn,  and  R. L. Tweedie, Markov Chains and Stochastic Stability, 2nd. Ed., Cambridge University Press, 2009.
 

\bibitem{gM85} G. Marsaglia, \textit{A Current View of Random Number Generators}, Computer Science and Statistics, Elsevier Science Publisher B.V., North-Holland, 1985.


\bibitem{Pincus1991} S. M. Pincus, \textit{Approximate entropy as a measure of system complexity},  Proceedings of the National Academy of Science, USA,  88, (1991), pp. 2297–2301.

\bibitem{PS96} S.M.  Pincus and B.H. Singer, \textit{Randomness and Degrees of
Irregularity}. Proceedings of the National Academy of Science, USA, 93 (1996), pp. 2083--2088.

\bibitem{Serfling1980} R. J. Serfling, Approximation Theorems of Mathematical Statistics, Wiley, New York, 1980.

\bibitem{XT07} X. Xu and W. Tsang, \textit{An Empirical Study on the Power of the Overlapping Serial Test},  proc. Asia Simulation Conference 2007,  J.W. Park, T.G. Kim and Y. B. Kim, Eds. Seoul, Korea, (2007), pp. 298-306.

\bibitem{vdVaart1998} Aad W. van der Vaart, Asymptotic Statistics, Cambridge University Press, 1998.





\end{thebibliography}
\end{document}